\documentclass[11pt,a4paper]{amsart}
\usepackage{amsmath,amsthm,amsfonts,graphicx,enumerate,url,calc}

\usepackage[colorlinks=true,citecolor=black,linkcolor=black]{hyperref}
\usepackage[lmargin=25mm,rmargin=25mm,tmargin=25mm,bmargin=25mm]{geometry}
\usepackage[numbers,sort&compress]{natbib}

\renewcommand{\baselinestretch}{1.5}
\setlength{\footnotesep}{\baselinestretch\footnotesep}

\newcommand{\doi}[1]{\href{http://dx.doi.org/#1}{\texttt{doi:#1}}}

\newcommand{\urlprefix}{}

\newcommand{\half}{\ensuremath{\protect\tfrac{1}{2}}}
\newcommand{\ceil}[1]{\ensuremath{\protect\lceil#1\rceil}}

\newcommand{\floor}[1]{\ensuremath{\protect\lfloor#1\rfloor}}

\newcommand{\EndProof}[1]{
  \begin{minipage}[b]{\textwidth-1cm}
    #1
  \end{minipage}\hfill\qed
  \vspace*{1ex} \renewcommand{\qed}{} }

\newcommand{\tablabel}[1]{\label{tab:#1}}
\newcommand{\tabref}[1]{Table~\ref{tab:#1}}

\newcommand{\Figure}[4][htb]{
  \begin{figure}[#1]
    \vspace*{1ex}
    \begin{center}#3\end{center} \vspace*{-1ex}
    \caption{\figlabel{#2}#4}
  \end{figure}
}

\newcommand{\thmlabel}[1]{\label{thm:#1}}
\newcommand{\thmref}[1]{Theorem~\ref{thm:#1}}

\newcommand{\lemlabel}[1]{\label{lem:#1}}
\newcommand{\lemref}[1]{Lemma~\ref{lem:#1}}

\newcommand{\conjlabel}[1]{\label{con:#1}}
\newcommand{\conjref}[1]{Conjecture~\ref{con:#1}}
\newcommand{\twoconjref}[2]{Conjectures~\ref{con:#1} and \ref{con:#2}}
\newcommand{\threeconjref}[3]{Conjectures~\ref{con:#1}, \ref{con:#2}
  and \ref{con:#3}} 
\newcommand{\eqnlabel}[1]{\label{eqn:#1}}
\newcommand{\eqnref}[1]{\eqref{eqn:#1}}

\newcommand{\figlabel}[1]{\label{fig:#1}}
\newcommand{\figref}[1]{Figure~\ref{fig:#1}}
\newcommand{\seclabel}[1]{\label{sec:#1}}
\newcommand{\secref}[1]{Section~\ref{sec:#1}}

\newcommand{\proplabel}[1]{\label{prop:#1}}
\newcommand{\propref}[1]{Proposition~\ref{prop:#1}}

\renewcommand{\EndProof}[1]{ \smallskip
  \begin{minipage}[b]{\textwidth-1cm}
    #1
  \end{minipage}\hfill\qed
  \smallskip \renewcommand{\qed}{} }

\theoremstyle{plain} 
\newtheorem{theorem}{Theorem}
\newtheorem{lemma}{Lemma}

\theoremstyle{definition} 
\newtheorem{proposition}{Proposition}
\newtheorem{conjecture}{Conjecture}

%%%%%%%%%%%%%%%%%%%%%%%%%%%%%%%%%%%%%%%%%%%%%%%%%%%%%%%%%%%%%%%%%%%%%%%

\begin{document}

\title[On the maximum number of cliques in a graph embedded in  a surface]{On the maximum number of cliques\\ in a graph embedded in  a surface}

\author{Vida Dujmovi\'c}
\address{\newline School of Computer Science
\newline Carleton University
\newline Ottawa, Canada}
\email{vida@cs.mcgill.ca}

\author{Ga\v{s}per Fijav\v{z}}
\address{\newline Faculty of Computer and Information Science
\newline University of Ljubljana\newline Ljubljana, Slovenia}
\email{gasper.fijavz@fri.uni-lj.si}

\author{Gwena\"el Joret}
\address{\newline  D\'epartement d'Informatique 
\newline Universit\'e Libre de Bruxelles
\newline Brussels, Belgium}
\email{gjoret@ulb.ac.be}

\author{Thom Sulanke}
\address{\newline Department of Physics
\newline Indiana University 
\newline Bloomington, Indiana, U.S.A.}
\email{tsulanke@indiana.edu}

\author{David~R.~Wood}
\address{\newline Department of Mathematics and Statistics
\newline The University of Melbourne
\newline Melbourne, Australia}
\email{woodd@unimelb.edu.au}

\thanks{
This work was supported in part by the Actions de Recherche Concert\'ees (ARC) fund of the Communaut\'e fran\c{c}aise de Belgique. 
Vida Dujmovi\'c is supported by the Natural Sciences and Engineering Research Council of Canada.
Ga\v{s}per Fijav\v{z} is supported in part by the Slovenian Research Agency, Research Program P1-0297. 
Gwena\"el Joret is a Postdoctoral Researcher of the Fonds National de la Recherche Scientifique (F.R.S.--FNRS).
David Wood is supported by a QEII Research Fellowship from the Australian Research Council.}
\date{\today}

\begin{abstract}
This paper studies the following question: Given a surface $\Sigma$ and an integer $n$, what is the maximum number of cliques in an $n$-vertex graph embeddable in $\Sigma$? We characterise the extremal graphs for this question, and prove that the answer is between  $8(n-\omega)+2^{\omega}$ and $8n+\frac{5}{2}\,2^{\omega}+o(2^{\omega})$, where $\omega$ is the maximum integer such that the complete graph $K_\omega$ embeds in $\Sigma$. For the surfaces $\mathbb{S}_0$, $\mathbb{S}_1$, $\mathbb{S}_2$, $\mathbb{N}_1$, $\mathbb{N}_2$, $\mathbb{N}_3$ and $\mathbb{N}_4$ we establish an exact answer. \\[3ex]
\textbf{MSC Classification}: 05C10 (topological graph theory), 05C35 (extremal problems)
\end{abstract}

\maketitle

\section{Introduction}

A \emph{clique} in a graph\footnote{We consider simple, finite,
  undirected graphs $G$ with vertex set $V(G)$ and edge set $E(G)$. A
  $K_3$ subgraph of $G$ is called a \emph{triangle} of $G$. For
  background graph theory see \citep{Diestel00}.} is a set of pairwise
adjacent vertices.  Let $c(G)$ be the number of cliques in a graph
$G$.  For example, every set of vertices in the complete graph $K_n$
is a clique, and $c(K_n)=2^n$.  This paper studies the following
question at the intersection of topological and extremal graph theory:
Given a surface $\Sigma$ and an integer $n$, what is the maximum
number of cliques in an $n$-vertex graph embeddable in $\Sigma$?

For previous bounds on the maximum number of cliques in certain graph
families see
\citep{FOT,Eckhoff-DM99,Zykov49,NSTW-JCTB06,ReedWood-TALG,Wood-GC07}
for example.  For background on graphs embedded in surfaces see
\citep{MoharThom,White84}.  Every surface is homeomorphic to
$\mathbb{S}_g$, the orientable surface with $g$ handles, or to
$\mathbb{N}_h$, the non-orientable surface with $h$ crosscaps.  The
\emph{Euler characteristic} of $\mathbb{S}_g$ is $2-2g$.  The
\emph{Euler characteristic} of $\mathbb{N}_h$ is $2-h$.  The
\emph{orientable genus} of a graph $G$ is the minimum integer $g$ such
that $G$ embeds in $\mathbb{S}_g$.  The \emph{non-orientable genus} of
a graph $G$ is the minimum integer $h$ such that $G$ embeds in
$\mathbb{N}_h$. The orientable genus of $K_n$ ($n\geq 3$) is
$\ceil{\frac{1}{12}(n-3)(n-4)}$, and its non-orientable genus is
$\ceil{\frac{1}{6}(n-3)(n-4)}$, except that the non-orientable genus
of $K_7$ is $3$.

Throughout the paper, fix a surface $\Sigma$ with Euler characteristic
$\chi$.  If $\Sigma=\mathbb{S}_0$ then let $\omega=3$, otherwise let
$\omega$ be the maximum integer such that $K_\omega$ embeds in
$\Sigma$.  Thus $\omega=\floor{ \half(7+\sqrt{49-24\chi} ) }$ except
for $\Sigma=\mathbb{S}_0$ and $\Sigma=\mathbb{N}_2$, in which case
$\omega=3$ and $\omega=6$, respectively.

% We have
% \begin{equation}
% \omega =
% \begin{cases}
%   3&\text{, if }\Sigma=\mathbb{S}_0\\
%   6&\text{, if }\Sigma=\mathbb{N}_2\\
%   \floor{ \half(7+\sqrt{49-24\chi} ) }&\text{otherwise}
% \end{cases}\enspace.
% \end{equation}

To avoid trivial exceptions, we implicitly assume that $|V(G)|\geq3$
whenever $\Sigma=\mathbb{S}_0$.

Our first main result is to characterise the $n$-vertex graphs
embeddable in $\Sigma$ with the maximum number of cliques; see
\thmref{Extremal} in \secref{Characterisation}.  Using this result we
determine an exact formula for the maximum number of cliques in an
$n$-vertex graph embeddable in each of the the sphere $\mathbb{S}_0$,
the torus $\mathbb{S}_1$, the double torus $\mathbb{S}_2$, the
projective plane $\mathbb{N}_1$, the Klein bottle $\mathbb{N}_2$, as
well as $\mathbb{N}_3$ and $\mathbb{N}_4$; see \secref{LowGenus}.  Our
third main result estimates the maximum number of cliques in terms of
$\omega$.  We prove that the maximum number of cliques in an
$n$-vertex graph embeddable in $\Sigma$ is between
$8(n-\omega)+2^{\omega}$ and
$8n+\frac{5}{2}\,2^{\omega}+o(2^{\omega})$; see \thmref{NumCliques} in
\secref{Bound}.

\section{Characterisation of Extremal Graphs}
\seclabel{Characterisation}

The upper bounds proved in this paper are of the form: every graph $G$
embeddable in $\Sigma$ satisfies $c(G)\leq8|V(G)|+f(\Sigma)$ for some
function $f$.  Define the \emph{excess} of $G$ to be $c(G)-8|V(G)|$.
Thus the excess of $G$ is at most $Q$ if and only if
$c(G)\leq8|V(G)|+Q$.  \thmref{NumCliques} proves that the maximum
excess of a graph embeddable in $\Sigma$ is finite.

In this section we characterise the graphs embeddable in $\Sigma$ with
maximum excess.  A \emph{triangulation} of $\Sigma$ is an embedding of
a graph in $\Sigma$ in which each facial walk has three vertices and
three edges with no repetitions. (We assume that every face of a graph
embedding is homeomorphic to a disc.)\

\begin{lemma}
  \lemlabel{Triangulation} Every graph $G$ embeddable in $\Sigma$ with
  maximum excess is a triangulation of $\Sigma$.
\end{lemma}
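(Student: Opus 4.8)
The plan is to prove the contrapositive in two stages: first, show that a graph $G$ of maximum excess must be edge-maximal (i.e. one cannot add an edge while staying embeddable in $\Sigma$); second, show that any edge-maximal embeddable graph is a triangulation of $\Sigma$. The second stage is essentially classical—an embedding of a graph in which some face is bounded by a walk of length $\geq 4$ (or a shorter walk with a repeated vertex or edge) admits a chord that can be added inside that face, contradicting edge-maximality, provided $|V(G)|$ is large enough to have a non-triangular face; the low-genus/small-$n$ corner cases are where one must be slightly careful, but the standing assumption $|V(G)|\geq 3$ when $\Sigma=\mathbb{S}_0$ handles the sphere, and for other surfaces one checks that if $G$ is not a triangulation then it has a face that is not a triangle and hence a missing edge.

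The first stage is the real content. Suppose $G$ is embeddable in $\Sigma$ and $e=uv$ is a non-edge such that $G+e$ is still embeddable in $\Sigma$; I want to show $c(G+e) > 8|V(G+e)| + (c(G) - 8|V(G)|)$, i.e.\ that adding $e$ strictly increases the excess. Since $|V(G+e)| = |V(G)|$, it suffices to show $c(G+e) > c(G)$, which is immediate because $\{u,v\}$ is a new clique of $G+e$. Hence any graph of maximum excess is edge-maximal with respect to embeddability in $\Sigma$. This part is short; the only subtlety is making sure the notion ``maximum excess'' is well-defined, which is exactly what \thmref{NumCliques} guarantees (the supremum is finite and attained).

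Combining the two stages: a maximum-excess graph $G$ is edge-maximal, and an edge-maximal embeddable graph with $|V(G)|\geq 3$ is a triangulation of $\Sigma$, because any embedding with a facial walk that is not a $3$-cycle either has a repeated vertex/edge on that walk—so two occurrences of a vertex can be joined, or the embedding can be locally modified to add an edge—or has length $\geq 4$, in which case two non-consecutive vertices of the face are non-adjacent (or, if adjacent, the face can still be split by re-routing through the disc), yielding a strictly larger embeddable graph. I expect the main obstacle to be the bookkeeping in this last step: one must rule out the degenerate possibilities where the obvious chord already exists as an edge drawn outside the face, and argue that in every such configuration the face can nonetheless be triangulated by adding edges inside its bounding disc, using that faces are open discs. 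This is standard in topological graph theory, so I would cite \citep{MoharThom} for the fact that an edge-maximal embedding in $\Sigma$ (on at least $3$ vertices) is a triangulation, and spend the bulk of the written proof on stage one and on confirming the hypotheses of that fact are met.
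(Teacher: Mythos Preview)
Your Stage~1 is correct and is also the paper's first move: if two vertices on the boundary of a face are non-adjacent, drawing the edge through the face strictly increases $c(G)$ without changing $|V(G)|$, so in a maximum-excess graph the boundary vertices of every face form a clique.

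The gap is Stage~2. The assertion ``edge-maximal among simple graphs embeddable in $\Sigma$ implies triangulation of $\Sigma$'' is \emph{false}, and there is no such theorem in \citep{MoharThom} to cite. Take $\Sigma=\mathbb{S}_1$ and $G=K_5$: it is complete, hence trivially edge-maximal; it has genus~$1$, hence a $2$-cell embedding in the torus; but Euler's formula forces $F=5$ faces of total boundary length $20$, so they cannot all be triangles. More generally, whenever $\binom{n}{2}<3n-3\chi$ the complete graph $K_n$ is edge-maximal and embeddable in $\Sigma$ but not a triangulation of $\Sigma$. Your escape clause ``if adjacent, the face can still be split by re-routing through the disc'' does not help: re-routing an existing edge changes the embedding, not the abstract graph, so $c(G)$ is unchanged; and drawing a second copy of an existing edge is forbidden in simple graphs. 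You also give no argument that a maximum-excess graph must have $|V(G)|$ large enough to avoid this regime, so the ``corner cases'' you flag are exactly where the proof breaks.

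What closes the gap is that ``maximum excess'' lets you compare $G$ to graphs with \emph{more vertices}, not just more edges---and this is precisely what the paper does. After Stage~1, if some face has at least four distinct boundary vertices (which already form a clique), insert a single new vertex adjacent to four of them: this adds $1$ vertex and $2^4=16$ cliques, so the excess rises by $8$. If every face has at most three distinct boundary vertices but some face is not a triangle, its facial walk contains a repetition $u,v,w,v$; insert two mutually adjacent new vertices $p,q$, each joined to $u,v,w$: this adds $2$ vertices and $3\cdot 2^3=24$ cliques, so again the excess rises by $8$. Edge additions alone cannot do this job.
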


\begin{proof}
  Since adding edges within a face increases the number of cliques,
  the vertices on the boundary of each face of $G$ form a clique.

  Suppose that some face $f$ of $G$ has at least four distinct
  vertices in its boundary.  Let $G'$ be the graph obtained from $G$
  by adding one new vertex adjacent to four distinct vertices of $f$.
  Thus $G'$ is embeddable in $\Sigma$, has $|V(G)|+1$ vertices, and
  has $c(G)+16$ cliques, which contradicts the choice of $G$.  Now
  assume that every face of $G$ has at most three distinct vertices.

  Suppose that some face $f$ of $G$ has repeated vertices.  Thus the
  facial walk of $f$ contains vertices $u,v,w,v$ in this order (where
  $v$ is repeated in $f$).  Let $G'$ be the graph obtained from $G$ by
  adding two new vertices $p$ and $q$, where $p$ is adjacent to
  $\{u,v,w,q\}$, and $q$ is adjacent to $\{u,v,w,p\}$.  So $G'$ is
  embeddable in $\Sigma$ and has $|V(G)|+2$ vertices.  If
  $S\subseteq\{p,q\}$ and $S\neq\emptyset$ and $T\subseteq\{u,v,w\}$,
  then $S\cup T$ is a clique of $G'$ but not of $G$.  It follows that
  $G'$ has $c(G)+24$ cliques, which contradicts the choice of $G$.
  Hence no face of $G$ has repeated vertices, and $G$ is a
  triangulation of $\Sigma$.
\end{proof}

Let $G$ be a triangulation of $\Sigma$.  An edge $vw$ of $G$ is
\emph{reducible} if $vw$ is in exactly two triangles in $G$.  We say
$G$ is \emph{irreducible} if no edge of $G$ is reducible
\citep{BE-IJM89,CDP-CGTA04,Sulanke06,Sulanke-KleinBottle,
  Sulanke-Generating,LawNeg-JCTB97,NakaOta-JGT95, JoretWood-JCTB10,
  Lavrenchenko}.  Note that $K_3$ is a triangulation of
$\mathbb{S}_0$, and by the above definition, $K_3$ is irreducible.  In
fact, it is the only irreducible triangulation of $\mathbb{S}_0$.  We
take this somewhat non-standard approach so that \thmref{Extremal}
below holds for all surfaces.

Let $vw$ be a reducible edge of a triangulation $G$ of $\Sigma$.  
Let $vwx$ and $vwy$ be the two
faces incident to $vw$ in $G$.  As illustrated in
\figref{ContractionSplitting}, let $G/vw$ be the graph obtained from
$G$ by \emph{contracting} $vw$; that is, delete the edges $vw,wy,wx$,
and identify $v$ and $w$ into $v$.  $G/vw$ is a simple graph since $x$
and $y$ are the only common neighbours of $v$ and $w$.  Indeed, $G/vw$
is a triangulation of $\Sigma$. Conversely, we say that $G$ is obtained from $G/vw$ by
\emph{splitting} the path $xvy$ at $v$.  If, in addition, $xy\in
E(G)$, then we say that $G$ is obtained from $G/vw$ by
\emph{splitting} the triangle $xvy$ at $v$.  Note that $xvy$ need not
be a face of $G/vw$. In the case that $xvy$ is a face, splitting $xvy$
is equivalent to adding a new vertex adjacent to each of $x,v,y$.

\Figure{ContractionSplitting}{\includegraphics{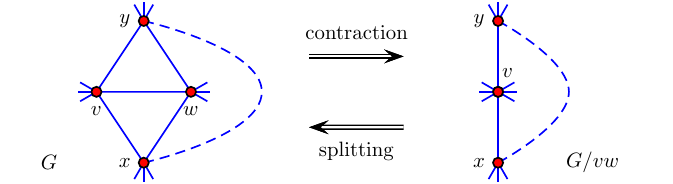}}{Contracting
  a reducible edge.}

% We now consider the inverse process to contraction.  Let $uvw$ be a
% triangle in a triangulation $G$ of $\Sigma$. (Note that $uvw$ need
% not be a face of $G$.)\ Let $S$ and $T$ be the sets of edges
% incident to $v$ such that the cyclic order of edges incident to $v$
% is $(vu,S,vw,T)$.  As illustrated in
% \figref{SplittingTriangulation}, let $G'$ be the graph obtained from
% $G$ as follows: (1) introduce a new vertex $v'$, (2) replace each
% edge $vx\in T$ by $v'x$, and (3) add the edges $vv'$, $v'u$ and
% $v'w$.  We say that $G'$ is obtained from $G$ by \emph{splitting the
%   triangle $uvw$ at $v$}.  Observe that $G'$ also embeds in
% $\Sigma$.  The cliques of $G'$ that are not in $G$ are
% $\{A\cup\{v'\}:A\subseteq\{u,v,w\}\}$.  Thus $c(G')=c(G)+8$.

% MR1901568,MR1844044
Graphs embeddable in $\Sigma$ with maximum excess are characterised in
terms of irreducible triangulations as follows.

\begin{theorem}
  \thmlabel{Extremal} Let $Q$ be the maximum excess of an irreducible
  triangulation of $\Sigma$.  Let $X$ be the set of irreducible
  triangulations of $\Sigma$ with excess $Q$.  Then the excess of
  every graph $G$ embeddable in $\Sigma$ is at most $Q$, with equality
  if and only if $G$ is obtained from some graph in $X$ by repeatedly
  splitting triangles.
\end{theorem}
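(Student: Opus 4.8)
By \lemref{Triangulation}, every graph embeddable in $\Sigma$ with maximum excess is a triangulation of $\Sigma$, so we may restrict attention to triangulations. The plan is to establish two quantitative facts about the splitting/contraction operation and then combine them. First, I would show that splitting a triangular face increases the excess by exactly $0$: adding a new vertex $z$ adjacent to the three vertices of a triangular face $xvy$ adds exactly $16$ new cliques (the nonempty subsets of $\{z\}$ together with any subset of $\{x,v,y\}$ — that is $2^3 \cdot 2 = 16$ cliques, but $8$ of those, namely $\{z\}\cup S$ for $S$ a clique already counted, wait — more carefully, the new cliques are exactly $\{z\}\cup S$ where $S\subseteq\{x,v,y\}$, of which there are $8$, plus we must recount: $c(G') = c(G) + 8$, hmm), so in fact $c(G') = c(G) + 8$ and $|V(G')| = |V(G)|+1$, giving excess unchanged. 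Thus splitting triangles preserves excess, which already shows that any graph obtained from a member of $X$ by repeatedly splitting triangles is a triangulation of $\Sigma$ with excess $Q$.

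Second, and this is the crux, I would prove the converse: every triangulation $G$ of $\Sigma$ with excess equal to $Q$ is obtained from some irreducible triangulation by repeatedly splitting triangles. The natural approach is induction on $|V(G)|$. If $G$ is irreducible, then its excess is at most $Q$ by definition of $Q$, hence equal to $Q$, so $G\in X$ and we are done. Otherwise $G$ has a reducible edge $vw$; form $G/vw$, which by the discussion preceding the theorem is again a triangulation of $\Sigma$ with one fewer vertex. The key inequality to establish is that contracting a reducible edge does not decrease the excess — equivalently, that $c(G) \le c(G/vw) + 8$. If this holds, then $G/vw$ has excess $\ge Q$, hence exactly $Q$; by induction $G/vw$ is obtained from some irreducible triangulation by repeated triangle-splittings; and since $G$ is obtained from $G/vw$ by splitting the path $xvy$ at $v$, I must argue that this split is in fact a \emph{triangle} split, i.e.\ that $xy\in E(G/vw)$. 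This last point should follow because equality $c(G) = c(G/vw)+8$ forces $x$ and $y$ to have the right adjacency structure — if $xy\notin E$, the count of destroyed cliques strictly exceeds the count of created ones.

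The bookkeeping for $c(G)$ versus $c(G/vw)$ is the main obstacle. The cliques of $G$ split into those not containing $w$ (which are essentially cliques of $G-w$, closely related to cliques of $G/vw$ not "using" the identification) and those containing $w$; one tracks how cliques through $w$ are either merged with cliques through $v$ or lost upon contraction. Writing $N_G(v)\cap N_G(w) = \{x,y\}$, a clique containing $w$ has the form $\{w\}\cup S$ with $S$ a clique of $G$ contained in $N_G(w)$; after contraction these map to cliques $\{v\}\cup S$ of $G/vw$, but this map can collide with cliques of $G$ that already contained $v$, and cliques $\{v,w\}\cup S'$ with $S'\subseteq\{x,y\}$ get identified down. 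I would set up the inclusion–exclusion carefully, isolating the contribution of cliques meeting $\{v,w,x,y\}$, and show the net change $c(G)-c(G/vw)$ equals $8$ plus a non-positive error term that vanishes exactly when the split is a triangle split. Once the two directions are in hand — splitting triangles preserves excess, contracting reducible edges does not decrease it, with equality characterising triangle splits — the theorem follows by combining the induction with \lemref{Triangulation}.
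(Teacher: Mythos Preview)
Your approach is correct and matches the paper's: induct on $|V(G)|$, reduce to triangulations via \lemref{Triangulation}, contract a reducible edge, and compare excesses. The only difference is that you anticipate a delicate inclusion--exclusion for $c(G)-c(G/vw)$, whereas the paper settles it in one line: under the identification $w\mapsto v$, every clique of $G$ maps to a clique of $G/vw$, and the only cliques of $G$ that collide with an already-represented clique are the sets $\{w\}\cup A$ with $A\subseteq\{v,x,y\}$ --- at most eight of them, and exactly eight precisely when $xy\in E(G)$.
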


\begin{proof}
  We proceed by induction on $|V(G)|$.  By \lemref{Triangulation}, we
  may assume that $G$ is a triangulation of $\Sigma$.  If $G$ is
  irreducible, then the claim follows from the definition of $X$ and
  $Q$.  Otherwise, some edge $vw$ of $G$ is in exactly two triangles
  $vwx$ and $vwy$.  By induction, the excess of $G/vw$ is at most $Q$,
  with equality if and only if $G/vw$ is obtained from some $H\in X$
  by repeatedly splitting triangles.  Hence $c(G/vw)\leq8|V(G/vw)|+Q$.

  Observe that every clique of $G$ that is not in $G/vw$ is in
  $\{A\cup\{w\}:A\subseteq\{x,v,y\}\}$.  Thus $c(G)\leq c(G/vw)+8$,
  with equality if and only if $xvy$ is a triangle.  Hence $c(G)\leq
  8|V(G)|+Q$; that is, the excess of $G$ is at most $Q$.

  Now suppose that the excess of $G$ equals $Q$.  Then the excess of
  $G/vw$ equals $Q$, and $c(G)= c(G/vw)+8$ (implying $xvy$ is a
  triangle).  By induction, $G/vw$ is obtained from $H$ by repeatedly
  splitting triangles.  Therefore $G$ is obtained from $H$ by
  repeatedly splitting triangles.

  Conversely, suppose that $G$ is obtained from some $H\in X$ by
  repeatedly splitting triangles.  Then $xvy$ is a triangle and $G/vw$
  is obtained from $H$ by repeatedly splitting triangles.  By
  induction, the excess of $G/vw$ equals $Q$, implying the excess of
  $G$ equals $Q$.
\end{proof}

\section{Low-Genus Surfaces}
\seclabel{LowGenus}

To prove an upper bound on the number of cliques in a graph embedded
in $\Sigma$, by \thmref{Extremal}, it suffices to consider irreducible
triangulations of $\Sigma$ with maximum excess.  The complete list of
irreducible triangulations is known for $\mathbb{S}_0$,
$\mathbb{S}_1$, $\mathbb{S}_2$, $\mathbb{N}_1$, $\mathbb{N}_2$,
$\mathbb{N}_3$ and $\mathbb{N}_4$.  In particular, \citet{SR34} proved
that $K_3$ is the only irreducible triangulation of $\mathbb{S}_0$
(under our definition of irreducible).  \citet{Lavrenchenko} proved
that there are $21$ irreducible triangulations of $\mathbb{S}_1$, each
with between $7$ and $10$ vertices.
% ; see \citep{Martinez} for related results.
\citet{Sulanke-Generating} proved that there are $396,\!784$
irreducible triangulations of $\mathbb{S}_2$, each with between $10$
and $17$ vertices.  \citet{Barnette-JCTB82} proved that the embeddings
of $K_6$ and $K_7-K_3$ in $\mathbb{N}_1$ are the only irreducible
triangulations of $\mathbb{N}_1$.  \citet{Sulanke-KleinBottle} proved
that there are $29$ irreducible triangulations of $\mathbb{N}_2$, each
with between $8$ and $11$ vertices (correcting an earlier result by
\citet{LawNeg-JCTB97}).  \citet{Sulanke-Generating} proved that there
are $9708$ irreducible triangulations of $\mathbb{N}_3$, each with
between $9$ and $16$ vertices.  \citet{Sulanke-Generating} proved that
there are $6,\!297,\!982$ irreducible triangulations of
$\mathbb{N}_4$, each with between $9$ and $22$ vertices.  Using the
lists of all irreducible triangulations due to \citet{Sulanke-Web} and
a naive algorithm for counting cliques\footnote{The code is available
  from the authors upon request.}, we have computed the set $X$ in
\thmref{Extremal} for each of the above surfaces; see \tabref{table}.
This data with \thmref{Extremal} implies the following results.

\renewcommand{\tabcolsep}{2.8pt}

\begin{table}[!h]
  \caption{\tablabel{table}The maximum excess of an $n$-vertex irreducible triangulation of $\Sigma$.}
  \begin{tabular}{ccc|cccccccccccccccccc|c}
    \hline
    $\Sigma$         & $\chi$&$\omega$& $n=3$ & $6$  & $7$  & $8$  & $9$  & $10$ & $11$& $12$& $13$& $14$  & $15$& $16$& $17$  &  $18$  &  $19$  &  $20$  &  $21$  &  $22$  & max\\\hline
    $\mathbb{S}_0$&$2$     &$3$          & {\boldmath $-16$} &         &         &         &         &          &         &         &         &               &         &          &           &           &           &         &         &         &     $-16$             \\
    $\mathbb{S}_1$&$0$     &$7$          &            &         & \textbf{72}    &  48   & 40    & 32      &         &         &         &               &         &         &           &           &           &         &         &         &     72             \\
    $\mathbb{S}_2$&$-2$   &$8$           &           &         &         &         &         & \textbf{208}    & 160  & 136  & 128  & 120        & 96    & 88    & 80      &          &           &         &         &         &       208          \\
    $\mathbb{N}_1$&$1$   &$6$           &            & \textbf{16}     &  8     &         &         &          &          &         &         &               &         &         &           &          &           &         &         &         &   16              \\
    $\mathbb{N}_2$&$0$   &$6$           &            &          &        & \textbf{48}    & \textbf{48}     & 40     & 32    &         &          &               &         &         &           &          &           &         &         &         &   48              \\
    $\mathbb{N}_3$&$-1$   &$7$           &          &          &        &         & \textbf{104}     & \textbf{104} & 96    & 80    &   80  &    72       & 64    &  56   &           &          &           &         &         &         &    104             \\
    $\mathbb{N}_4$&$-2$   &$8$           &          &          &        &         & \textbf{216}     & 208 & 152  & 136  &   136 &  136      & 128    &  120&  112 & 107    &  99     & 91   & 83       &  75      &     216            \\
    \hline
  \end{tabular}
\end{table}

\begin{proposition} 
\proplabel{Planar}
  Every planar graph $G$ with $|V(G)|\geq3$ has at most $8|V(G)|-16$
  cliques, as proved by \citet{Wood-GC07}. Moreover, a planar graph
  $G$ has $8|V(G)|-16$ cliques if and only if $G$ is obtained from the
  embedding of $K_3$ in $\mathbb{S}_0$ by repeatedly splitting
  triangles.
\end{proposition}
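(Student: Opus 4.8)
The plan is to obtain this Proposition as an immediate corollary of \thmref{Extremal} in the case $\Sigma=\mathbb{S}_0$. The first step is to pin down the quantities $Q$ and $X$ appearing in that theorem for the sphere. By the classical result of \citet{SR34} quoted above, $K_3$ is the only irreducible triangulation of $\mathbb{S}_0$ under our definition of irreducibility, so the set of irreducible triangulations of $\mathbb{S}_0$ is the singleton consisting of $K_3$ (with its unique embedding in the sphere), and hence $Q$ and $X$ are governed by $K_3$ alone.

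The second step is the trivial excess computation for $K_3$: every subset of $V(K_3)$ is a clique, so $c(K_3)=2^3=8$, whence the excess of $K_3$ equals $c(K_3)-8\,|V(K_3)|=8-24=-16$. Since $K_3$ is the only irreducible triangulation of $\mathbb{S}_0$, it follows that $Q=-16$ and $X=\{K_3\}$.

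The final step is to invoke \thmref{Extremal}: every graph $G$ embeddable in $\mathbb{S}_0$, i.e.\ every planar graph (and we implicitly assume $|V(G)|\geq3$ in this case, which is consistent with the fact that splitting triangles only adds vertices), has excess at most $Q=-16$; that is, $c(G)\leq 8\,|V(G)|-16$, recovering the bound of \citet{Wood-GC07}. Moreover equality holds if and only if $G$ is obtained from the (unique) embedding of $K_3$ in $\mathbb{S}_0$ by repeatedly splitting triangles, which is precisely the asserted characterisation. I do not expect any real obstacle here: the substantive work is already contained in \lemref{Triangulation} and \thmref{Extremal}, and all that remains is the one-line excess calculation for $K_3$ together with a citation of the classification of irreducible sphere triangulations.
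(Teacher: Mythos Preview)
Your proposal is correct and matches the paper's approach exactly: the paper derives this proposition directly from \thmref{Extremal} together with the Steinitz--Rademacher classification and the computed excess $-16$ for $K_3$ (recorded in \tabref{table}). There is nothing further to add.
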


\begin{proposition} 
\proplabel{Toroidal}
  Every toroidal graph $G$ has at most $8|V(G)|+72$ cliques. Moreover,
  a toroidal graph $G$ has $8|V(G)|+72$ cliques if and only if $G$ is
  obtained from the embedding of $K_7$ in $\mathbb{S}_1$ by repeatedly splitting
  triangles  (see \figref{K6K7}).  
\end{proposition}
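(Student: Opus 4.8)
The plan is to apply \thmref{Extremal} with $\Sigma=\mathbb{S}_1$. By that theorem, the maximum excess of a toroidal graph equals the maximum excess $Q$ of an irreducible triangulation of $\mathbb{S}_1$, and a toroidal graph $G$ attains excess $Q$ if and only if $G$ is obtained by repeatedly splitting triangles from some member of the set $X$ of irreducible triangulations of $\mathbb{S}_1$ with excess $Q$. So the whole task reduces to computing $Q$ and $X$, after which the proposition is immediate.

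To compute $Q$ and $X$, I would use \citet{Lavrenchenko}'s classification: there are exactly $21$ irreducible triangulations of $\mathbb{S}_1$, each with between $7$ and $10$ vertices. For each of them, compute $c(G)$ and hence the excess $c(G)-8|V(G)|$; the results are collected in \tabref{table}. The decisive case is $n=7$: by Euler's formula a $7$-vertex triangulation of $\mathbb{S}_1$ satisfies $V-E+F=0$ and $2E=3F$, so $E=3V=21=\binom{7}{2}$ and the triangulation must be $K_7$; then $c(K_7)=2^7=128$ and its excess is $128-56=72$. Reading off \tabref{table}, every irreducible triangulation on $8$, $9$ or $10$ vertices has excess at most $48$. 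Hence $Q=72$ and $X$ consists of $K_7$ alone (with its unique embedding in $\mathbb{S}_1$), and \thmref{Extremal} yields both $c(G)\le 8|V(G)|+72$ for every toroidal $G$ and the stated characterisation of equality.

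I expect the main obstacle to be computational rather than conceptual: one needs the explicit list of the $21$ irreducible triangulations together with a correct clique-counting routine, and one must be sure the enumeration is complete, so that no irreducible triangulation of excess $72$ other than $K_7$ is overlooked. Given that Lavrenchenko's classification is well established and that the $8$-, $9$- and $10$-vertex maxima in \tabref{table} are all strictly below $72$, this verification is routine; the only genuinely substantive ingredient is the correctness of that classification.
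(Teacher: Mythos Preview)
Your proposal is correct and follows exactly the paper's approach: the proposition is obtained directly from \thmref{Extremal} together with Lavrenchenko's classification of the $21$ irreducible triangulations of $\mathbb{S}_1$ and the computed excesses in \tabref{table}, which show $Q=72$ is attained only by $K_7$. Your Euler-formula argument that the unique $7$-vertex triangulation of $\mathbb{S}_1$ is $K_7$ is a nice explicit touch, but otherwise the argument coincides with the paper's.
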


\begin{figure}[!h]
  \begin{center}
  \includegraphics{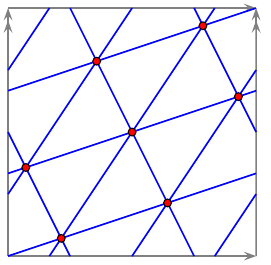}
\quad
    \includegraphics{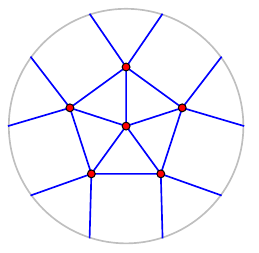}
\hfill
  \end{center}
\vspace{-3ex}
  \caption{\figlabel{K6K7} $K_7$ embedded in the torus, and $K_6$ embedded in the projective plane.}
\end{figure}

\begin{proposition} 
\proplabel{DoubleTorus}
  Every graph $G$ embeddable in $\mathbb{S}_2$ has at most
  $8|V(G)|+208$ cliques. Moreover, a graph $G$ embeddable in
  $\mathbb{S}_2$ has $8|V(G)|+208$ cliques if and only if $G$ is
  obtained from one of the following two graph embeddings in
  $\mathbb{S}_2$ by splitting repeatedly triangles\footnote{This representation
    describes a graph with vertex set $\{a,b,c,\dots\}$ by adjacency
    lists of the vertices in order $a,b,c,\dots$. The graph \# refers
    to the position in Sulanke's file \citep{Sulanke-Web}.}:

  graph \#1:
  bcde,aefdghic,abiehfgd,acgbfihe,adhcigfb,begchjid,bdcfeijh,bgjfcedi,bhdfjgec,fhgi

  graph \#6:
  bcde,aefdghijc,abjehfgd,acgbfjihe,adhcjgfb,begchjd,bdcfejh,bgjfcedi,bhdj,bidfhgec
\end{proposition}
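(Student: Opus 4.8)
The plan is to deduce the result directly from \thmref{Extremal}.  By that theorem, the maximum excess of a graph embeddable in $\mathbb{S}_2$ equals the maximum excess $Q$ of an \emph{irreducible} triangulation of $\mathbb{S}_2$, and a graph $G$ embeddable in $\mathbb{S}_2$ satisfies $c(G)=8|V(G)|+Q$ if and only if $G$ is obtained from some irreducible triangulation of excess $Q$ by repeatedly splitting triangles.  So it suffices to show that $Q=208$ and that exactly two irreducible triangulations of $\mathbb{S}_2$ have excess $208$, namely the embeddings \#1 and \#6 listed above.

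The key input is the complete list of irreducible triangulations of $\mathbb{S}_2$: by \citet{Sulanke-Generating} there are exactly $396,\!784$ of them, each with between $10$ and $17$ vertices, and all of them are available explicitly in \citep{Sulanke-Web}.  For each such triangulation $G$ I would compute $c(G)$ by directly enumerating its cliques --- since $|V(G)|\le 17$, a brute-force scan over vertex subsets, or a simple recursive clique search, terminates quickly --- and then record the excess $c(G)-8|V(G)|$.  Carrying this out fills in the $\mathbb{S}_2$ row of \tabref{table}: the maximum excess over all $396,\!784$ triangulations is $208$, it is attained only when $|V(G)|=10$, and among the $10$-vertex irreducible triangulations it is attained by precisely two, the embeddings \#1 and \#6.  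Feeding $Q=208$ and this two-element set $X$ back into \thmref{Extremal} then gives both the bound $c(G)\le 8|V(G)|+208$ and the stated characterisation of equality.

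To keep the argument checkable by hand, I would separately verify the two extremal examples.  A $10$-vertex triangulation of $\mathbb{S}_2$ has exactly $36$ edges.  In graph \#1 the vertices $a$ and $j$ have degree $4$ and are non-adjacent, so the remaining $36-8=28$ edges form a $K_8$ on $\{b,c,d,e,f,g,h,i\}$, with $a$ joined to $\{b,c,d,e\}$ and $j$ joined to $\{f,g,h,i\}$; hence the cliques of graph \#1 are the $2^8$ subsets of that $K_8$, the $16$ cliques consisting of $a$ together with a subset of $\{b,c,d,e\}$, and the $16$ cliques consisting of $j$ together with a subset of $\{f,g,h,i\}$ --- a total of $288=8\cdot 10+208$.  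An analogous count (again a $K_8$ plus two non-adjacent degree-$4$ vertices, but attached differently) gives $c=288$ for graph \#6.  One should also confirm that both embeddings really are irreducible triangulations of $\mathbb{S}_2$; this is part of Sulanke's data and can be checked from the rotation systems given.

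The proof is fundamentally a finite computation, so the main obstacle is reliability rather than ingenuity: one must trust the completeness of the enumeration of irreducible triangulations of $\mathbb{S}_2$ and the correctness of the clique-counting program run over all $396,\!784$ inputs.  This is mitigated by the hand-verification of the two extremal triangulations above and, if desired, by an independent reimplementation of the clique counter.  Note that one cannot restrict attention to small triangulations: \thmref{Extremal} needs the maximum over \emph{all} irreducible triangulations of $\mathbb{S}_2$, so those on $11$ to $17$ vertices must also be processed --- the data happens to show that their excess is strictly smaller, but that is an output of the computation rather than something known a priori.
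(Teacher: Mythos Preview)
Your proposal is correct and follows exactly the paper's approach: apply \thmref{Extremal} together with Sulanke's complete list of the $396{,}784$ irreducible triangulations of $\mathbb{S}_2$ and a clique-counting computation to read off $Q=208$ and the set $X=\{\#1,\#6\}$. Your additional hand-verification that graphs \#1 and \#6 each consist of a $K_8$ on eight vertices plus two non-adjacent degree-$4$ vertices (and hence have $256+16+16=288=8\cdot 10+208$ cliques) is a nice sanity check not present in the paper, but the core argument is identical.
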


\begin{proposition} 
\proplabel{ProjectivePlanar}
  Every projective planar graph $G$ has at most $8|V(G)|+16$
  cliques. Moreover, a projective planar graph $G$ has $8|V(G)|+16$
  cliques if and only if $G$ is obtained from the embedding of $K_6$
  in $\mathbb{N}_1$ by repeatedly splitting triangles  (see \figref{K6K7}).
\end{proposition}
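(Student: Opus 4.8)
The plan is to derive this proposition as an immediate consequence of \thmref{Extremal} together with Barnette's classification of the irreducible triangulations of $\mathbb{N}_1$. By \thmref{Extremal}, it suffices to determine the maximum excess $Q$ of an irreducible triangulation of $\mathbb{N}_1$ and the set $X$ of irreducible triangulations attaining this maximum; then every projective planar graph $G$ satisfies $c(G)\leq8|V(G)|+Q$, with equality exactly for those $G$ obtained from a member of $X$ by repeatedly splitting triangles. So the entire argument reduces to a finite computation.

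By \citet{Barnette-JCTB82}, the only irreducible triangulations of $\mathbb{N}_1$ are the embedding of $K_6$ and the embedding of $K_7-K_3$, so I would just compute their excesses. Since every subset of $V(K_6)$ is a clique, $c(K_6)=2^6=64$, and hence the excess of $K_6$ is $64-8\cdot6=16$. For $K_7-K_3$, that is $K_7$ with the three edges of a triangle $\{a,b,c\}$ deleted, a set is a clique if and only if it contains at most one of $a,b,c$ together with an arbitrary subset of the remaining four vertices; hence $c(K_7-K_3)=4\cdot2^4=64$, giving excess $64-8\cdot7=8$. Therefore $Q=16$ and $X$ consists solely of the embedding of $K_6$ in $\mathbb{N}_1$.

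Substituting $Q=16$ and this description of $X$ into \thmref{Extremal} yields both halves of the proposition: $c(G)\leq8|V(G)|+16$ for every projective planar graph $G$, with equality if and only if $G$ is obtained from the embedding of $K_6$ in $\mathbb{N}_1$ by repeatedly splitting triangles. There is essentially no obstacle beyond invoking Barnette's list and carrying out the two small clique counts; the only point that warrants a line of care is to note that the excess of a triangulation depends only on its number of cliques (by the definition of excess), so that the comparison $16>8$ is all that is needed to single out $K_6$ as the unique extremal irreducible triangulation.
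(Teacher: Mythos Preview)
Your proposal is correct and follows essentially the same approach as the paper: apply \thmref{Extremal} together with Barnette's list of the two irreducible triangulations of $\mathbb{N}_1$, then compare their excesses. The paper simply records the excesses $16$ and $8$ in \tabref{table} (via a computer count), whereas you supply the explicit clique counts $c(K_6)=64$ and $c(K_7-K_3)=4\cdot 2^4=64$ by hand; either way the conclusion is identical.
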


\begin{proposition} 
\proplabel{Klein}
  Every graph $G$ embeddable in the Klein bottle $\mathbb{N}_2$ has at most
  $8|V(G)|+48$ cliques. Moreover, a graph $G$ embeddable in
  $\mathbb{N}_2$ has $8|V(G)|+48$ cliques if and only $G$ is obtained
  from one of the following three graph embeddings in $\mathbb{N}_2$
  by repeatedly splitting triangles (see \figref{Klein}):

  graph \#3: bcdef,afgdhec,abefd,acfhbge,adghbcf,aecdhgb,bfhed,bdfge

  graph \#6: bcde,aefdghc,abhegd,acgbfhe,adhcgfb,beghd,bdcefh,bgfdec

  graph \#26:
  bcdef,afghidec,abefd,acfhgibe,adbcf,aecdhigb,bfidh,bgdfi,bhfgd
\end{proposition}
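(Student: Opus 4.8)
The plan is to obtain this proposition as an essentially immediate corollary of \thmref{Extremal} applied with $\Sigma=\mathbb{N}_2$, so that the whole question collapses to understanding the finitely many irreducible triangulations of the Klein bottle. By \thmref{Extremal}, if $Q$ is the maximum excess of an irreducible triangulation of $\mathbb{N}_2$ and $X$ is the set of irreducible triangulations attaining excess $Q$, then every graph $G$ embeddable in $\mathbb{N}_2$ satisfies $c(G)\leq 8|V(G)|+Q$, with equality precisely for the graphs obtained from a member of $X$ by repeatedly splitting triangles. So it suffices to prove that $Q=48$ and that $X$ consists exactly of the three embeddings named in the statement (graphs \#3, \#6 and \#26 in Sulanke's file).

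For this I would invoke the complete enumeration of the irreducible triangulations of $\mathbb{N}_2$ due to \citet{Sulanke-KleinBottle}: there are $29$ of them, each on between $8$ and $11$ vertices. Since these are all graphs on at most $11$ vertices, one can simply list every clique of each by brute force and record the excess $c(G)-8|V(G)|$; this is exactly the ``naive algorithm for counting cliques'' referred to above, and the results are tabulated in the $\mathbb{N}_2$ row of \tabref{table}. The outcome is that the excess equals $48$ for certain triangulations on $8$ and on $9$ vertices, equals $40$ on $10$ vertices, and equals $32$ on $11$ vertices, whence $Q=48$; and exactly three of the $29$ triangulations realise $48$, namely the two $8$-vertex triangulations (with $8\cdot 8+48 = 112$ cliques each) and the one $9$-vertex triangulation (with $8\cdot 9 + 48 = 120$ cliques) displayed in the statement. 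One checks directly from the given adjacency lists that each of these three graphs is a triangulation of $\mathbb{N}_2$ in which the two endpoints of every edge have at least three common neighbours, hence is irreducible. Plugging $Q=48$ and this three-element set $X$ into \thmref{Extremal} then yields both halves of the claim; here one uses that splitting a triangular face (inserting a new vertex joined to the three face vertices) keeps the graph embedded in $\mathbb{N}_2$, adds one vertex, and adds exactly $8$ cliques, so the excess is preserved.

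The mathematical content is therefore almost entirely verification, and the main obstacle is not a proof step but trusting the two inputs: the completeness of Sulanke's classification of the irreducible triangulations of the Klein bottle (a delicate enumeration, which in fact corrects an earlier count of \citet{LawNeg-JCTB97}), and the correctness of the clique-counting routine. To make the argument as self-contained as possible one should double-check by hand that the three exhibited graphs have the claimed numbers of cliques and that each of the remaining $26$ irreducible triangulations of $\mathbb{N}_2$ has strictly smaller excess; both checks are routine but tedious, and completing them is precisely what validates the $\mathbb{N}_2$ row of \tabref{table}.
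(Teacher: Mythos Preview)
Your proposal is correct and is essentially identical to the paper's own approach: apply \thmref{Extremal} with $\Sigma=\mathbb{N}_2$, invoke Sulanke's classification of the $29$ irreducible triangulations of the Klein bottle, and compute the excess of each to obtain $Q=48$ and the three-element set $X$ recorded in \tabref{table}. The paper states this derivation in one sentence (``This data with \thmref{Extremal} implies the following results''); your write-up simply unpacks that sentence and adds some sanity checks.
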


\begin{figure}[!h]
  \begin{center}
    \includegraphics{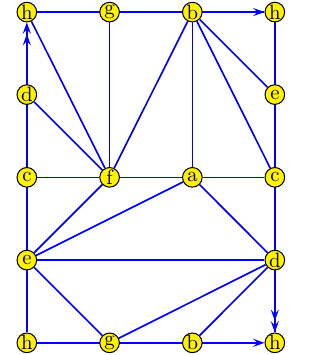}
    \includegraphics{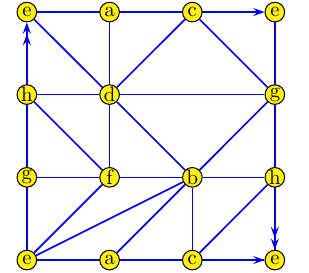}
    \includegraphics{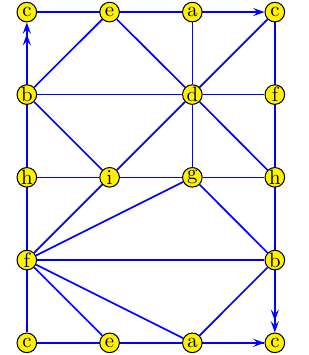}
  \end{center}
 \caption{\figlabel{Klein} Irreducible triangulations of
    $\mathbb{N}_2$ with maximum excess: left-to-right \#3, \#6, \#26.}
\end{figure}

\begin{proposition} 
\proplabel{N3}
  Every graph $G$ embeddable in $\mathbb{N}_3$ has at most
  $8|V(G)|+104$ cliques. Moreover, a graph $G$ embeddable in
  $\mathbb{N}_3$ has $8|V(G)|+104$ cliques if and only $G$ is obtained
  from one of the following 15 graph embeddings in $\mathbb{N}_3$ by
repeatedly   splitting triangles:

  graph \#1:
  bcde,aefdghic,abiegfd,acfbgie,adicghfb,behigcd,bdifceh,bgefi,bhfgdec

  graph \#3:
  bcde,aefdghic,abiehd,achfbgie,adichgfb,begihd,bdifeh,bgecdfi,bhfgdec

  graph \#4:
  bcde,aefdghic,abiehd,achifbge,adgichfb,behgid,bdeifh,bgfecdi,bhdfgec

  graph \#6:
  bcde,aefdghic,abiehfd,acfbgihe,adhcifb,beighcd,bdifh,bgfcedi,bhdgfec

  graph \#8:
  bcde,aefdghic,abiehgfd,acfbgihe,adhcigfb,begcd,bdiefch,bgcedi,bhdgec

  graph \#10:
  bcde,aefdghic,abifegd,acgbfhie,adigcfb,becighd,bdceifh,bgfdi,bhdegfc

  graph \#12:
  bcde,aefdghic,abifehd,achfbgie,adihcfb,becighd,bdifh,bgfdcei,bhedgfc

  graph \#14:
  bcde,aefdghic,abigehd,achfbgie,adihcgfb,beghd,bdicefh,bgfdcei,bhedgc

  graph \#16:
  bcde,aefgdhic,abiegd,acgbhfie,adicghfb,behdig,bfihecd,bdfegi,bhgfdec

  graph \#19:
  bcde,aefghdic,abiehd,achbifge,adgichfb,behidg,bfdeih,bgifecd,bdfhgec

  graph \#20:
  bcde,aefghdic,abigehd,achbifge,adgchifb,beidg,bfdecih,bgiecd,bdfehgc

  graph \#21:
  bcde,aefghdic,abihegd,acgfhbie,adigchfb,behdg,bfdceih,bgicefd,bdeghc

  graph \#22:
  bcde,aefghdic,abihegd,acgfibhe,adhcgifb,beidg,bfdceih,bgiced,bdfeghc

  graph \#82:
  bcdef,afgdhiec,abefd,acfigbhe,adhgibcf,aecdihgb,bfheid,bdegfi,bhfdge

  graph \#2464:
  bcdef,afghijdec,abefd,acfhigjbe,adbcf,aecdhjigb,bfidjh,bgjfdi,bhdgfj,bifhgd
\end{proposition}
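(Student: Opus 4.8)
My plan is to invoke \thmref{Extremal}, which reduces the whole statement to a finite computation: it suffices to determine the maximum excess $Q$ of an irreducible triangulation of $\mathbb{N}_3$, together with the set $X$ of irreducible triangulations attaining it. Once I show that $Q=104$ and that $X$ consists of the fifteen listed embeddings, \thmref{Extremal} yields at once that every graph $G$ embeddable in $\mathbb{N}_3$ satisfies $c(G)\leq 8|V(G)|+104$, with equality exactly when $G$ arises from one of these fifteen triangulations by repeatedly splitting triangles (zero splittings recovers the seeds, so they are themselves extremal).

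To carry out the computation I would use the known complete list of irreducible triangulations. By \citet{Sulanke-Generating} there are exactly $9708$ irreducible triangulations of $\mathbb{N}_3$, each on between $9$ and $16$ vertices, and their explicit adjacency lists are available from \citet{Sulanke-Web}. For each triangulation $T$ on this list I would compute $c(T)$ by a direct clique enumeration (for example, building cliques greedily by repeatedly adjoining a common neighbour of larger index), subtract $8|V(T)|$ to obtain the excess, and tabulate the maxima by vertex count. This produces the $\mathbb{N}_3$ row of \tabref{table}: the maximum excess over $n$-vertex irreducible triangulations equals $104$ for $n\in\{9,10\}$ and is strictly smaller for every $n\geq 11$, so $Q=104$.

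It then remains to read off $X$. Exactly fifteen of the $9708$ triangulations have excess $104$; these are the embeddings \#1, \#3, \#4, \#6, \#8, \#10, \#12, \#14, \#16, \#19, \#20, \#21, \#22, \#82 and \#2464 of \citet{Sulanke-Web}, whose adjacency lists appear in the statement. Feeding $Q=104$ and this $X$ into \thmref{Extremal} finishes the proof.

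The only real obstacle is that the argument is computer-assisted on two fronts: it relies on the correctness and, crucially, the \emph{completeness} of Sulanke's enumeration of the irreducible triangulations of $\mathbb{N}_3$ (completeness being what guarantees that no extremal triangulation is missed), and on the correctness of the clique-counting code. Neither admits a short by-hand proof, but both are independently checkable---for example by recomputing the clique counts with a second implementation, and by directly verifying that each of the fifteen listed graphs is an irreducible triangulation of $\mathbb{N}_3$ with exactly $8|V(G)|+104$ cliques.
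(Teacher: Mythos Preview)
Your proposal is correct and follows exactly the paper's own argument: invoke \thmref{Extremal} to reduce to the finite set of irreducible triangulations of $\mathbb{N}_3$, then feed Sulanke's complete list of $9708$ such triangulations through a clique-counting routine to determine $Q=104$ and the fifteen-element set $X$. The paper's proof is precisely this computer-assisted computation (with the same caveats you note about relying on the completeness of Sulanke's enumeration and the correctness of the clique-counting code).
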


\begin{proposition} 
  \proplabel{N4} Every graph $G$ embeddable in $\mathbb{N}_4$ has at
  most $8|V(G)|+216$ cliques. Moreover, a graph $G$ embeddable in
  $\mathbb{N}_4$ has $8|V(G)|+216$ cliques if and only if $G$ is
  obtained from one of the following three graph embeddings in
  $\mathbb{N}_4$ by repeatedly splitting triangles:

  graph \#1:
  bcdef,afdgehic,abiegfhd,achgbfie,adicgbhf,aehcgidb,bdhifce,befcdgi,bhgfdec

  graph \#2:
  bcdef,afdgehic,abifehgd,acgbfhie,adigbhcf,aecighdb,bdchfie,becgfdi,bhdegfc

  graph \#3:
  bcdef,afdgheic,abihfegd,acgbfihe,adhbigcf,aechgidb,bdceifh,bgfcide,begfdhc
\end{proposition}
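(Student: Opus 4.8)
The plan is to invoke \thmref{Extremal}, exactly as for the preceding propositions. That theorem reduces everything to irreducible triangulations: writing $Q$ for the maximum excess $c(T)-8|V(T)|$ over all irreducible triangulations $T$ of $\mathbb{N}_4$, it tells us that every graph $G$ embeddable in $\mathbb{N}_4$ has excess at most $Q$, with equality precisely for the graphs obtained from an irreducible triangulation of excess $Q$ by repeatedly splitting triangles. So the task is to pin down $Q$ and the associated extremal triangulations, and then to verify that $Q=216$ and that the extremal set consists of exactly the three triangulations listed in \propref{N4}. Note that no separate argument is needed for graphs with few vertices: \thmref{Extremal}, together with \lemref{Triangulation}, already handles every graph embeddable in $\mathbb{N}_4$.

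To carry this out I would start from the complete list of irreducible triangulations of $\mathbb{N}_4$. By \citet{Sulanke-Generating} there are exactly $6{,}297{,}982$ of them, each on between $9$ and $22$ vertices, and the list is available from \citet{Sulanke-Web}. For each triangulation $T$ on the list I would compute $c(T)$ by direct enumeration of cliques — since $|V(T)|\le 22$, a brute-force search is perfectly adequate — and then form the excess $c(T)-8|V(T)|$. Taking the maximum over the list (and, for \tabref{table}, the maximum for each value of $n$) yields $Q=216$, attained precisely by three $9$-vertex triangulations, namely graphs \#1, \#2 and \#3 in Sulanke's file, whose adjacency lists are reproduced in the statement. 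Substituting $Q=216$ and this extremal set into \thmref{Extremal} then gives both the bound $c(G)\le 8|V(G)|+216$ and the characterisation of equality.

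The argument is thus computational rather than structural, and the one thing to be careful about is the reliability of the two ingredients used as black boxes: the completeness of Sulanke's enumeration of the irreducible triangulations of $\mathbb{N}_4$, and the correctness of the clique-counting code. The former is an established result; the latter is a short routine whose output can be independently spot-checked by hand — each of the three extremal triangulations must have exactly $8\cdot 9+216=288$ cliques, and one can also sanity-check against the known per-$n$ maxima in \tabref{table}. The sheer size of the enumeration ($\sim 6.3\times 10^{6}$ triangulations) is the only practical obstacle, and it is mild given how small each triangulation is.
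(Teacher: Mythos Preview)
Your proposal is correct and follows exactly the paper's approach: apply \thmref{Extremal} to reduce to irreducible triangulations, then use Sulanke's complete list for $\mathbb{N}_4$ together with a brute-force clique count to identify $Q=216$ and the three extremal $9$-vertex triangulations. This is precisely how the paper derives \propref{N4} (and the other low-genus propositions), so there is nothing to add.
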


Note that the three embeddings in \propref{N4} are of the same graph.

\section{A Bound for all Surfaces}
\seclabel{Bound}

Recall that $\Sigma$ is a surface with Euler characteristic $\chi$,
and if $\Sigma=\mathbb{S}_0$ then $\omega=3$, otherwise $\omega$ is
the maximum integer such that $K_\omega$ embeds in $\Sigma$.  We start
with the following upper bound on the minimum degree of a graph.

\begin{lemma}
  \lemlabel{SmallDegree} Assume $\Sigma \neq \mathbb{S}_{0}$. Then
  every graph $G$ embeddable in $\Sigma$ has minimum degree at most
  \begin{equation*}
    6+\frac{\omega^2-5\omega-7}{|V(G)|}\enspace.
  \end{equation*}
\end{lemma}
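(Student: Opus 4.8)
The plan is to use Euler's formula together with the fact that $\Sigma\neq\mathbb{S}_0$ forces $\omega\geq 6$, so that a graph $G$ embeddable in $\Sigma$ with few vertices may be handled directly and a graph with many vertices has bounded average degree. Let $G$ be embeddable in $\Sigma$, let $n=|V(G)|$, $m=|E(G)|$, and let $\delta$ denote the minimum degree of $G$. We may assume $G$ is edge-maximal among graphs embeddable in $\Sigma$ on the vertex set $V(G)$ (adding edges only increases the minimum degree, so it suffices to bound $\delta$ for such $G$); in particular we may assume $G$ is a triangulation of $\Sigma$, whence $3f=2m$ where $f$ is the number of faces. Plugging into Euler's formula $n-m+f=\chi$ gives $m=3n-3\chi$, and hence $\sum_{v}\deg(v)=2m=6n-6\chi$, so
\begin{equation*}
  \delta \;\leq\; \frac{6n-6\chi}{n} \;=\; 6-\frac{6\chi}{n}\enspace.
\end{equation*}

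It remains to check that $-6\chi \leq \omega^2-5\omega-7$, i.e. that $6\chi \geq 5\omega+7-\omega^2$, for every surface $\Sigma\neq\mathbb{S}_0$. Recall from the introduction that $\omega=\FLOOR{\half(7+\sqrt{49-24\chi})}$ except for $\Sigma=\mathbb{N}_2$ (where $\chi=0$, $\omega=6$). By definition of $\omega$ we have $\omega \leq \half(7+\sqrt{49-24\chi})$, equivalently $2\omega-7\leq\sqrt{49-24\chi}$; since $\Sigma\neq\mathbb{S}_0$ gives $\omega\geq 6$, the left side is positive, so squaring yields $(2\omega-7)^2 \leq 49-24\chi$, i.e. $4\omega^2-28\omega+49 \leq 49-24\chi$, i.e. $24\chi \leq 28\omega-4\omega^2$, i.e. $6\chi \leq 7\omega-\omega^2$. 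I then need the reverse-flavoured inequality $6\chi\geq 5\omega+7-\omega^2$; this follows from the integrality of $\omega$, which gives $\omega \geq \half(5+\sqrt{49-24\chi})$ (since $K_{\omega+1}$ does not embed, $\omega+1 > \half(7+\sqrt{49-24\chi})$, i.e. $\omega\geq\half(5+\sqrt{49-24\chi})$, at least when $49-24\chi\geq 1$), so $2\omega-5\geq\sqrt{49-24\chi}\geq 0$, squaring gives $(2\omega-5)^2\geq 49-24\chi$, i.e. $4\omega^2-20\omega+25\geq 49-24\chi$, i.e. $24\chi\geq 24+20\omega-4\omega^2$, i.e. $6\chi\geq 6+5\omega-\omega^2 \geq 5\omega+7-\omega^2$ is \emph{not} quite what drops out — I get $6\chi\geq 6+5\omega-\omega^2$, which is weaker than needed by $1$. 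So the final step is to absorb this off-by-one: when $n$ is large the slack $6-6\chi/n$ versus $6+(\omega^2-5\omega-7)/n$ is negligible, and for the finitely many small values of $n$ (those below the smallest irreducible triangulation of $\Sigma$, or more crudely $n\leq \omega$) one checks the bound directly, using that such $G$ is a subgraph of $K_n$ with $n\leq\omega$ and hence $\delta\leq n-1$.

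The main obstacle is exactly this last arithmetic reconciliation: the exceptional surfaces $\mathbb{S}_0$ (excluded), $\mathbb{N}_2$ (where $\omega=6$, $\chi=0$, and one checks $-6\cdot 0=0\leq 6^2-5\cdot 6-7=-1$ fails — so $\mathbb{N}_2$ must be treated by hand as well, noting that there $3n-3\chi=3n$ but an embeddable graph on $n$ vertices has at most $3n$ edges only when $n$ is large, and for $n\leq 6$ directly), and the off-by-one in the squaring argument all conspire to require a short case analysis rather than a one-line bound. Concretely I would: (i) prove $\delta\leq 6-6\chi/n$ for triangulations as above; (ii) verify $-6\chi\leq\omega^2-5\omega-7$ for all $\Sigma$ with $\omega\geq 7$ via the squaring computation (which there has enough room); (iii) handle $\omega=6$ (the surfaces $\mathbb{N}_1$, $\mathbb{N}_2$, $\mathbb{S}_1$, $\mathbb{N}_3$ with small $\chi$) and any residual small-$n$ cases individually, using the crude bound $\delta\leq n-1$ when $n\leq\omega$ together with $\omega^2-5\omega-7\geq -(n-1)n+\dots$ type estimates; and (iv) note the resulting bound is monotone in $n$ in the right direction so that checking it at the threshold value of $n$ suffices.
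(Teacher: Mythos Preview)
Your overall strategy is exactly the paper's: from Euler's formula (using $|E(G)|\le 3|V(G)|-3\chi$, which needs no triangulation assumption) you get $\delta \le 6 - 6\chi/|V(G)|$, and the whole lemma reduces to the purely arithmetic claim $-6\chi \le \omega^{2}-5\omega-7$. The gap in your write-up is entirely in this last inequality; the subsequent case analysis you sketch (small $n$, small $\omega$, threshold values, etc.) is both unnecessary and, as written, does not actually close the gap, since the inequality you need is about $\chi$ and $\omega$ alone and has nothing to do with $n$.

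In fact your own computation is one line away from working. From $K_{\omega+1}$ not embedding you obtain the \emph{strict} inequality $\omega+1 > \tfrac12(7+\sqrt{49-24\chi})$, i.e.\ $2\omega-5 > \sqrt{49-24\chi}$; you then throw away the strictness by rewriting it as $\omega \ge \tfrac12(5+\sqrt{49-24\chi})$. If instead you keep the strict inequality and square (legitimate since $\omega\ge 6$ forces $2\omega-5>0$), you get $(2\omega-5)^2 > 49-24\chi$, hence $-6\chi < \omega^{2}-5\omega-6$. Both sides are integers, so this is precisely $-6\chi \le \omega^{2}-5\omega-7$, with no residual case analysis required.

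The paper reaches the same inequality by a slightly different bookkeeping: instead of the Heawood formula for $\omega$ it uses the Ringel--Youngs genus formulas for $K_{\omega+1}$. Since $K_{\omega+1}$ does not embed in $\Sigma$ and the (non-)orientable genus is an integer, one gets $g \le \big\lceil \tfrac{1}{12}(\omega-2)(\omega-3)\big\rceil - 1$ in the orientable case and $h \le \big\lceil \tfrac{1}{6}(\omega-2)(\omega-3)\big\rceil - 1$ in the non-orientable case. Because $(\omega-2)(\omega-3)$ is an integer, the ceiling adds at most $\tfrac{11}{12}$ (resp.\ $\tfrac{5}{6}$), and one reads off $2-\chi \le \tfrac{1}{6}(\omega-2)(\omega-3) - \tfrac{1}{6}$, which rearranges to $-6\chi \le \omega^{2}-5\omega-7$. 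The ``extra $-1$'' you were missing is exactly the drop from $\lceil\cdot\rceil$ to $\lceil\cdot\rceil-1$ coming from the integrality of the genus.

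You are right that $\mathbb{N}_2$ is exceptional: there $\omega=6$ sits strictly below the Heawood value $7$, so $\omega+1>\tfrac12(7+\sqrt{49-24\chi})$ fails (it is an equality), and correspondingly the non-orientable genus of $K_7$ is the exceptional value $3$ rather than the formula value $2$. But this is a single surface to note separately, not a reason to split on $n$ or on $\omega\ge 7$ versus $\omega=6$ across all surfaces.
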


\begin{proof} 
  By the definition of $\omega$, the complete graph $K_{\omega + 1}$
  cannot be embedded in $\Sigma$.  Thus if $\Sigma=\mathbb{S}_g$ then
  $g = \half(2-\chi) \leq\ceil{\frac{1}{12}(\omega-2)(\omega-3)} - 1$,
  and if $\Sigma=\mathbb{N}_h$ then $h=2-\chi \leq
  \ceil{\frac{1}{6}(\omega-2)(\omega-3)} - 1$.  In each case, it
  follows that $2 - \chi \leq\frac{1}{6}(\omega-2)(\omega-3) -
  \frac{1}{6}$. That is,
  \begin{equation}
    \eqnlabel{OmegaChi}
    -6\chi\leq\omega^2-5\omega-7\enspace.
  \end{equation}

  Say $G$ has minimum degree $d$. It follows from Euler's Formula that
  $|E(G)|\leq3 |V(G)|-3\chi$. By \eqnref{OmegaChi},

\EndProof{
  \begin{equation*}
    \eqnlabel{SmallDegree}
    d\leq \frac{2 |E(G)|}{|V(G)|}\leq\frac{6 |V(G)|-6\chi}{|V(G)|}\leq 6+\frac{\omega^2-5\omega-7}{|V(G)|}\enspace.
  \end{equation*}}
\end{proof}

For graphs in which the number of vertices is slightly more than
$\omega$, \lemref{SmallDegree} can be reinterpreted as follows.

\begin{lemma}
  \lemlabel{SmallDegreeJ} Assume $\Sigma \neq \mathbb{S}_{0}$. Let
  $s:=\ceil{\sqrt{\omega + 11}-3}\geq 1$.  Let $G$ be a graph
  embeddable in $\Sigma$.  If $G$ has at most $\omega+1$ vertices,
  then $G$ has minimum degree at most $\omega - 1$.  If $G$ has at
  least $\omega+j$ vertices, where $j\in[2,s]$, then $G$ has minimum
  degree at most $\omega - j + 1$.
\end{lemma}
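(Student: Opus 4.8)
The plan is to read off both assertions from \lemref{SmallDegree} and the definition of $\omega$. Throughout, write $n:=|V(G)|$ and let $d$ denote the minimum degree of $G$. The first assertion barely uses topology: if $n\leq\omega$ then $d\leq n-1\leq\omega-1$ trivially, while if $n=\omega+1$ then $G\neq K_{\omega+1}$ because $K_{\omega+1}$ does not embed in $\Sigma$ by the definition of $\omega$; hence $G$ has two non-adjacent vertices, so $d\leq n-2=\omega-1$.

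For the second assertion I would first dispose of the generic case $\omega\geq7$, where $\omega^2-5\omega-7\geq7>0$. Since $t\mapsto(\omega^2-5\omega-7)/t$ is then decreasing, the hypothesis $n\geq\omega+j$ together with \lemref{SmallDegree} gives
\begin{equation*}
  d\;\leq\;6+\frac{\omega^2-5\omega-7}{n}\;\leq\;6+\frac{\omega^2-5\omega-7}{\omega+j}\enspace.
\end{equation*}
As $d$ is an integer, it suffices to show that the right-hand side is strictly smaller than $\omega-j+2$. Clearing the positive denominator $\omega+j$ and cancelling the $\omega^2$ terms, this inequality is equivalent to $(j+2)^2<\omega+11$, that is, to $j<\sqrt{\omega+11}-2$. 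This in turn is exactly what the choice of $s$ buys us: $j\leq s=\ceil{\sqrt{\omega+11}-3}$, and $\ceil{x}<x+1$ for every real $x$, so $j\leq s<\sqrt{\omega+11}-2$.

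It then only remains to treat $\omega=6$, which among surfaces $\Sigma\neq\mathbb{S}_0$ occurs exactly for $\Sigma\in\{\mathbb{N}_1,\mathbb{N}_2\}$. Here $\omega^2-5\omega-7=-1$ and $s=\ceil{\sqrt{17}-3}=2$, so the only admissible value is $j=2$ and the claim reads $d\leq5$; but \lemref{SmallDegree} already yields $d\leq6-1/n<6$, hence $d\leq5=\omega-j+1$. Overall the argument is elementary; the only real subtlety, and the reason the $\omega=6$ surfaces are peeled off and handled by hand, is that $\omega^2-5\omega-7$ changes sign at $\omega=6$, so the monotonicity step used when $\omega\geq7$ cannot be applied verbatim there.
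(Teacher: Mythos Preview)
Your argument is correct and follows the same route as the paper: both derive the second assertion from \lemref{SmallDegree} by replacing $|V(G)|$ with $\omega+j$ and showing that the resulting bound is strictly below $\omega-j+2$ (the paper rewrites this as $\omega-j+1+\frac{j^2+5j-7}{\omega+j}$ and checks the fraction is less than $1$, which is algebraically equivalent to your inequality $(j+2)^2<\omega+11$). The one substantive difference is your explicit case split at $\omega=6$. The paper passes directly from \lemref{SmallDegree} to $d\leq 6+\frac{\omega^2-5\omega-7}{\omega+j}$, tacitly using that $t\mapsto(\omega^2-5\omega-7)/t$ is non-increasing; but for $\omega=6$ the numerator equals $-1$ and this monotonicity fails when $|V(G)|>\omega+j$. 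Your separate handling of $\omega=6$ (where $d\leq 6-1/n<6$ settles the matter instantly) therefore patches a small gap in the paper's presentation, even though the conclusion itself was never in danger.
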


\begin{proof}
  Say $G$ has minimum degree $d$. If $|V(G)| \leq \omega$, then
  trivially $d \leq \omega - 1$.  If $|V(G)| = \omega + 1$, then $G$
  is not complete (by the definition of $\omega$), again implying that
  $d \leq \omega - 1$. Now assume $|V(G)| \geq \omega + j$ for some
  $j\in[2,s]$. By \lemref{SmallDegree},
$$
d\leq 6+\frac{\omega^2-5\omega-7}{\omega+j}=\omega-j + 1 +\frac{j^{2}
  + 5j - 7}{\omega+j} \enspace.
$$
Since $j\leq s < \sqrt{\omega + 11} - 2$, we have $j^{2} + 5j - 7 \leq
s^{2} + 4s - 7 + j < \omega + j$.  It follows that $d\leq\omega-j+1$.
\end{proof}

Now we prove our first upper bound on the number of cliques.

\begin{lemma}
  \lemlabel{FirstUpperBound} Assume $\Sigma \neq \mathbb{S}_{0}$. Let
  $s:=\ceil{\sqrt{\omega + 11}-3}\geq 1$.  Let $G$ be an $n$-vertex
  graph embeddable in $\Sigma$. Then
$$
c(G)\leq \left\{
  \begin{array}{ll}
    \frac{5}{2}\,2^{\omega} & \text{ if } n \leq \omega+s,      \\
    \frac{5}{2}\,2^{\omega}+(n-\omega-s)2^{\omega-s+1}  & \text{ otherwise.}    
  \end{array}
\right.
$$
\end{lemma}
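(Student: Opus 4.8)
The plan is to bound $c(G)$ by counting cliques according to their size, separately handling the ``large'' cliques (those with many vertices) and the ``small'' cliques, and to use \lemref{SmallDegreeJ} to set up an induction that peels off a minimum-degree vertex. First I would dispose of the base cases: when $n\le\omega+s$, every clique of $G$ has at most $\omega$ vertices (since $K_{\omega+1}$ does not embed in $\Sigma$), so $c(G)\le c(K_n)=2^n\le 2^{\omega+s}$; I would check that $2^{\omega+s}\le\tfrac52\,2^\omega$ for all admissible $\omega$, i.e.\ that $s\le 1$ whenever this inequality is tight and that the small values of $\omega$ with $s\ge 2$ still satisfy $2^s\le\tfrac52$ — wait, that fails for $s\ge2$, so the base case actually needs the sharper count $c(G)\le \sum_{i\le \omega}\binom{n}{i}$ together with $n\le\omega+s$ and the crude estimate $\sqrt{\omega+11}-3\le$ (something small relative to $\omega$); the point is that $\binom{\omega+s}{\le\omega}$ is a small constant multiple of $2^\omega$, and the constant $\tfrac52$ is chosen to absorb it. This constant-checking is the first place where care is required.

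For the inductive step, suppose $n\ge\omega+s+1$, and let $j$ be chosen so that $n=\omega+j$ with $j\ge s+1$ (or $n>\omega+s$ in general). By \lemref{SmallDegreeJ}, if $n\ge\omega+j$ with $j\in[2,s]$ then $G$ has a vertex $v$ of degree at most $\omega-j+1$; in the regime $n>\omega+s$ we only know $\deg(v)\le\omega-s+1$ from the $j=s$ case of the lemma (the bound does not improve further), which is exactly why the second branch of the claimed formula has the fixed exponent $\omega-s+1$. Every clique of $G$ either avoids $v$ — and is then a clique of $G-v$, an $(n-1)$-vertex graph embeddable in $\Sigma$ — or contains $v$, in which case it is $\{v\}$ together with a clique contained in $N(v)$, and there are at most $2^{\deg(v)}\le 2^{\omega-s+1}$ such cliques. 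Hence $c(G)\le c(G-v)+2^{\omega-s+1}$, and applying the induction hypothesis to $G-v$ gives $c(G)\le \tfrac52\,2^\omega+(n-1-\omega-s)2^{\omega-s+1}+2^{\omega-s+1}=\tfrac52\,2^\omega+(n-\omega-s)2^{\omega-s+1}$, as required. One has to be slightly careful at the transition $n=\omega+s+1$: there $G-v$ has $\omega+s$ vertices, the first branch applies to it, and we need $\tfrac52\,2^\omega+2^{\omega-s+1}$ to match the second-branch value $\tfrac52\,2^\omega+(1)\cdot2^{\omega-s+1}$, which it does.

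I expect the main obstacle to be the base case rather than the induction: verifying that $c(G)\le\tfrac52\,2^\omega$ for all $n\le\omega+s$ requires showing $\sum_{i=0}^{\omega}\binom{\omega+s}{i}\le\tfrac52\,2^\omega$, and since $s=\ceil{\sqrt{\omega+11}-3}$ grows (slowly) with $\omega$, one cannot treat $s$ as bounded. The clean way is to bound the ``overshoot'' $\sum_{i=0}^{\omega}\binom{\omega+s}{i}-2^{\omega}\le 2^{\omega+s}-2^{\omega+s-1}-\dots$; more precisely write $2^{\omega+s}=\sum_{i=0}^{\omega+s}\binom{\omega+s}{i}$ and discard the top $s$ terms, using $\binom{\omega+s}{\omega+s}+\dots+\binom{\omega+s}{\omega+1}\ge$ a suitable fraction of $2^{\omega+s}$ that forces $\sum_{i\le\omega}\binom{\omega+s}{i}\le\tfrac52\,2^\omega$; since $s\le\sqrt{\omega+11}-2<\omega$ for $\omega\ge$ a small threshold, the discarded mass is at least $(1-\tfrac{5}{2^{s+1}})2^{\omega+s}$ — this is where $\tfrac52$ is forced — and the few genuinely small surfaces ($\omega\le 8$, say) are checked by hand against \tabref{table}. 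The rest of the argument is the routine peeling induction described above.
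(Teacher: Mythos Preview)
Your inductive step for $n>\omega+s$ is fine and is exactly the paper's ``type-1'' peeling: remove a minimum-degree vertex $v$ with $\deg(v)\le\omega-s+1$ (from \lemref{SmallDegreeJ} with $j=s$) and pick up at most $2^{\omega-s+1}$ cliques through $v$.

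The genuine gap is the base case $n\le\omega+s$. Your plan there is to use only the fact that cliques have size at most $\omega$, giving $c(G)\le\sum_{i\le\omega}\binom{\omega+s}{i}$. But this sum is \emph{not} bounded by $\tfrac52\,2^{\omega}$, neither for small $\omega$ nor asymptotically. By symmetry
\[
\sum_{i=0}^{\omega}\binom{\omega+s}{i}=2^{\omega+s}-\sum_{i=0}^{s-1}\binom{\omega+s}{i},
\]
and since $s\sim\sqrt{\omega}$, the subtracted tail $\sum_{i<s}\binom{\omega+s}{i}$ is polynomial in $\omega$ while $2^{\omega+s}$ is $2^{s}\cdot 2^{\omega}\sim 2^{\sqrt{\omega}}\cdot 2^{\omega}$; so your bound is of order $2^{\sqrt{\omega}}\cdot 2^{\omega}$, not $O(2^{\omega})$. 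Concretely, already for $\omega=7$ one has $s=2$, $\omega+s=9$, and $\sum_{i\le7}\binom{9}{i}=502>320=\tfrac52\cdot 2^{7}$. So the appeal to ``check small $\omega$ by hand'' cannot rescue this; the inequality fails for every $\omega$ once $s\ge2$.

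What you are missing is that \lemref{SmallDegreeJ} keeps improving as the graph shrinks below $\omega+s$ vertices: when $|V(G_i)|=\omega+j$ with $j\in[2,s]$ the minimum degree is at most $\omega-j+1$, and when $|V(G_i)|=\omega+1$ it is at most $\omega-1$. The paper simply continues the peeling through this range (its ``type-2'' vertices), and the contributions form a geometric-type sum
\[
2^{\omega-1}+\sum_{j=2}^{s}2^{\omega-j+1}\;<\;\tfrac32\,2^{\omega},
\]
after which the remaining $\omega$ vertices contribute at most $2^{\omega}$ (the ``type-3'' part). Those two pieces together give the $\tfrac52\,2^{\omega}$. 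In short: do not stop the induction at $n=\omega+s$; keep peeling with the $j$-dependent degree bounds all the way down to $\omega$ vertices.
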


\begin{proof}
  Let $v_1,v_2,\dots,v_n$ be an ordering of the vertices of $G$ such
  that $v_i$ has minimum degree in the subgraph
  $G_i:=G-\{v_1,\dots,v_{i-1}\}$.  Let $d_i$ be the degree of $v_i$ in
  $G_i$ (which equals the minimum degree of $G_i$).  Charge each
  non-empty clique $C$ in $G$ to the vertex $v_i\in C$ with $i$
  minimum.  Charge the clique $\emptyset$ to $v_n$.

  We distinguish three types of vertices.  Vertex $v_i$ is type-1 if
  $i\in[1,n-\omega-s]$.  Vertex $v_i$ is type-2 if
  $i\in[n-\omega-s+1,n-\omega]$.  Vertex $v_i$ is type-3 if
  $i\in[n-\omega+1,n]$.

  Each clique charged to a type-3 vertex is contained in
  $\{v_{n-\omega+1},\dots,v_n\}$, and there are at most $2^{\omega}$
  such cliques.

  Say $C$ is a clique charged to a type-1 or type-2 vertex $v_i$.
  Then $C-\{v_i\}$ is contained in $N_{G_i}(v_i)$, which consists of
  $d_i$ vertices.  Thus the number of cliques charged to $v_i$ is at
  most $2^{d_i}$.  Recall that $d_i$ equals the minimum degree of
  $G_i$, which has $n-i+1$ vertices.

  If $v_i$ is type-2 then, by \lemref{SmallDegreeJ} with
  $j=n-\omega-i+1\in[1,s]$, we have $d_i\leq \omega-j + 1$, and
  $d_i\leq \omega-j$ if $j = 1$.  Thus the number of cliques charged
  to type-2 vertices is at most
$$2^{\omega-1} + \sum_{j=2}^s2^{\omega-j+1}\leq 2^{\omega-1} + \sum_{j=1}^{\omega-1}2^j
<\frac{3}{2}\,2^{\omega}\enspace.$$

If $v_i$ is type-1 then $G_i$ has more than $\omega+s$ vertices, and
thus $d_i\leq \omega-s+1$ by \lemref{SmallDegreeJ} with $j=s$.  Thus
the number of cliques charged to type-1 vertices is at most
$(n-\omega-s)2^{\omega-s+1}$.
\end{proof}

We now prove the main result of this section; it provides lower and 
upper bound on the maximum number of cliques in a  graph embeddable in
$\Sigma$.

% \begin{theorem}
%   \thmlabel{NumCliques} The maximum number of cliques in an $n$-vertex
%   graph embeddable in $\Sigma$ is at least $8(n-\omega)+2^{\omega}$
%   and at most $8n+\frac{5}{2}\,2^{\omega}+o(2^{\omega})$.
% \end{theorem}

\begin{theorem}
  \thmlabel{NumCliques} Every $n$-vertex graph embeddable in $\Sigma$
  contains at most $8n+\frac{5}{2}\,2^{\omega}+o(2^{\omega})$ cliques.
  Moreover, for each $n\geq \omega$, there is an $n$-vertex graph
  embeddable in $\Sigma$ with $8(n-\omega)+2^{\omega}$ cliques.
\end{theorem}

\begin{proof}
 To prove the upper bound, we may assume that
  $\Sigma\neq\mathbb{S}_0$, and by \thmref{Extremal}, we need only
  consider $n$-vertex irreducible triangulations of
  $\Sigma$. \citet{JoretWood-JCTB10} proved that, in this case, $n\leq
  22-13\chi$.
  % (See \citep{NakaOta-JGT95,CDP-CGTA04} for earlier bounds.)\
  By \eqnref{OmegaChi},
  \begin{equation*}
    \eqnlabel{NakaOta}
    n\leq 22-13\chi\leq 22+\tfrac{13}{6}(\omega^2-5\omega-7)< 3\omega^2\enspace.
  \end{equation*}
  If $n \leq \omega + s$ then $c(G) \leq \tfrac{5}{2}\,2^{\omega}$ by
  \lemref{FirstUpperBound}.  If $n > \omega + s$ then by the same
  lemma,
$$c(G)\leq  
\tfrac{5}{2}\,2^{\omega}+(3\omega^2-\omega-s)2^{\omega-s+1}<
\tfrac{5}{2}\,2^{\omega} +3\omega^2\,2^{\omega-s+1}<
\tfrac{5}{2}\,2^{\omega}+2^{\omega-s+2\log\omega+3}\enspace.$$ Since
$s\in\Theta(\sqrt{\omega})$, we have $c(G)\leq
\frac{5}{2}\,2^{\omega}+o(2^{\omega})$.

  To prove the lower bound, start with $K_\omega$ embedded in $\Sigma$
  (which has $2^\omega$ cliques). Now, while there are less than
  $n$ vertices, insert a new vertex adjacent to each vertex of a
  single face. Each new vertex adds at least 8 new cliques.  Thus we
  obtain an $n$-vertex graph embedded in $\Sigma$ with at least
  $8(n-\omega)+2^{\omega}$ cliques.
\end{proof}

\section{Concluding Conjectures}

We conjecture that the upper bound in \thmref{NumCliques} can be
improved to more closely match the lower bound.

\begin{conjecture}
  \conjlabel{ImprovedUpperBound} Every graph $G$ embeddable in
  $\Sigma$ has at most $8|V(G)|+2^{\omega}+o(2^{\omega})$ cliques.
\end{conjecture}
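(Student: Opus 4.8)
\medskip

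Here is a possible line of attack on \conjref{ImprovedUpperBound}. The plan is to keep the two reductions used to prove \thmref{NumCliques} and to replace the constant $\tfrac52$ of \lemref{FirstUpperBound} by a factor $1+o(1)$. By \thmref{Extremal}, the maximum excess over all graphs embeddable in $\Sigma$ equals the maximum excess of an irreducible triangulation of $\Sigma$, and by the bound of Joret and Wood every irreducible triangulation of $\Sigma$ has fewer than $3\omega^2$ vertices (via \eqnref{OmegaChi}). Hence for such a graph $G$ the term $8|V(G)|$ is $O(\omega^2)=o(2^\omega)$, so \conjref{ImprovedUpperBound} is equivalent to the statement that every irreducible triangulation $G$ of $\Sigma$ satisfies $c(G)\le 2^\omega+o(2^\omega)$. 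For such $G$, \lemref{FirstUpperBound} already gives $c(G)\le\tfrac52 2^\omega+o(2^\omega)$: the type-$1$ vertices contribute $O(\omega^2)\cdot 2^{\omega-s+1}=o(2^\omega)$, the type-$3$ vertices contribute at most $2^\omega$, and the $s=\Theta(\sqrt\omega)$ type-$2$ vertices contribute less than $\tfrac32 2^\omega$. So the entire problem is to shave off the $\tfrac32$ contributed by the type-$2$ vertices.

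\medskip

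Almost all of that slack comes from the two type-$2$ vertices deleted when $\omega+1$ and $\omega+2$ vertices remain, each of which is charged up to $2^{\omega-1}$ cliques via \lemref{SmallDegreeJ}. I see two angles for improving this. The first is to sharpen the per-vertex charge directly: the charge $2^{d_i}$ on a vertex $v_i$ of degree $d_i$ is the number of cliques of $G_i$ through $v_i$, namely $c(G_i[N_{G_i}(v_i)])$, and this is close to $2^{d_i}$ only when $N_{G_i}(v_i)$ is within a few edges of a clique of size $d_i\approx\omega$, which by Euler's formula applied to $G_i$ exactly as in \lemref{SmallDegree}/\eqnref{OmegaChi} forces $G_i$ to contain a near-copy of $K_{\omega+1}$; one would then want a genus statement to the effect that, for $\omega$ large, $K_{\omega+1}$ minus $O(\sqrt\omega)$ edges has the same Euler genus as $K_{\omega+1}$ and hence does not embed in $\Sigma$, allowing the charge on each type-$2$ vertex to be lowered by a factor $2^{\Omega(\sqrt\omega)}$. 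The second angle is to abandon the one-vertex-at-a-time charging: charge each clique to a maximal clique containing it; bound the cliques of size $<k_0:=\lceil\omega/\log^2\omega\rceil$ crudely by $\sum_{j<k_0}\binom{|V(G)|}{j}=2^{O(\omega/\log\omega)}=o(2^\omega)$; and show, via the Euler's-formula overlap inequality $a^2+b^2-i^2\le\omega^2+O(\omega)$ (valid for any two cliques of sizes $a\ge b$ meeting in $i$ vertices), that every clique of size $\ge k_0$ differs from a subset of one fixed maximum clique $M^*$ by only $O(\mathrm{polylog}\,\omega)$ vertices, so that the cliques of size $\ge k_0$ all live within a set of $\omega+O(\mathrm{polylog}\,\omega)$ vertices. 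This already yields $c(G)\le 2^{\omega+o(\omega)}$, and the remaining task is to reduce the $o(\omega)$ additive slack in the exponent to $0$ — that is, to prove that every clique of size $\ge k_0$ is actually contained in $M^*$.

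\medskip

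In both angles the hard part is the same: showing that an irreducible triangulation of $\Sigma$ cannot contain a dense subgraph on $\omega+O(1)$ vertices — such as $K_{\omega+1}$ minus an edge, which by itself already has $3\cdot2^{\omega-1}$ cliques — and more generally that the part of an irreducible triangulation of $\Sigma$ beyond a single near-complete block contributes only $o(2^\omega)$ cliques. Whether such a block can occur appears to hinge on whether $\Sigma$ has Euler genus close to that of $K_{\omega+1}$, which happens only for the one or two densest surfaces with clique number $\omega$; resolving that arithmetic-topological point, or handling those few surfaces separately, is presumably exactly what keeps \conjref{ImprovedUpperBound} open.
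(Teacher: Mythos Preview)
The statement you are addressing is \conjref{ImprovedUpperBound}, which the paper states as an \emph{open conjecture} and does not prove. There is therefore no proof in the paper to compare your proposal against.

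Your write-up is not a proof either, and you are explicit about this: you call it ``a possible line of attack'', you lay out two heuristic angles, and you close by identifying exactly the obstruction you cannot resolve. As a research outline your reductions are sound and track the paper's own framework precisely: the passage via \thmref{Extremal} to irreducible triangulations, the Joret--Wood vertex bound combined with \eqnref{OmegaChi} to get $n<3\omega^2$, and the decomposition of the charge in \lemref{FirstUpperBound} into the type-$1$/type-$2$/type-$3$ pieces are all exactly what the paper does to reach the $\tfrac{5}{2}\,2^{\omega}$ bound, and your observation that the whole gap sits in the type-$2$ contribution is correct.

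The genuine gap --- which you yourself flag --- is that neither of your two angles is carried through. In the first angle you would need a theorem saying that $K_{\omega+1}$ minus $O(\sqrt{\omega})$ edges still fails to embed in $\Sigma$; this is false in general (for surfaces whose Euler genus is only slightly below that of $K_{\omega+1}$, deleting even one edge can drop the genus), so the argument cannot work uniformly over all $\Sigma$ without the ``handling those few surfaces separately'' step you allude to but do not supply. In the second angle you correctly derive the overlap inequality $a^{2}+b^{2}-i^{2}\le \omega^{2}+O(\omega)$ from Euler's formula, and this does confine large cliques to a common set of $\omega+o(\omega)$ vertices, but you stop at $c(G)\le 2^{\omega+o(\omega)}$ and note that closing the remaining $o(\omega)$ in the exponent is the hard part. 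That is an honest assessment; it is also exactly why the paper leaves \conjref{ImprovedUpperBound} as a conjecture.
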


If $K_\omega$ triangulates $\Sigma$, then we conjecture the following
exact answer.

\begin{conjecture}
  \conjlabel{ExactUpperBound} Suppose that $K_\omega$ triangulates
  $\Sigma$. Then every graph $G$ embeddable in $\Sigma$ has at most
  $8(|V(G)|-\omega)+2^{\omega}$ cliques, with equality if and only if
  $G$ is obtained from $K_{\omega}$ by repeatedly splitting triangles.
\end{conjecture}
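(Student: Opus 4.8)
The plan is to apply \thmref{Extremal}. Since $K_\omega$ triangulates $\Sigma$, it has $\omega$ vertices and $2^\omega$ cliques, hence excess $2^\omega-8\omega$; and $K_\omega$ is irreducible, since every edge of $K_\omega$ lies in $\omega-2$ triangles, which is never $2$ when $K_\omega$ triangulates a surface (the case $\omega=4$ does not arise, as $K_4$ triangulates only $\mathbb{S}_0$, where $\omega=3$). By \thmref{Extremal} it therefore suffices to prove that $K_\omega$ is the \emph{unique} irreducible triangulation of $\Sigma$ of maximum excess; that is, that every irreducible triangulation $G\ne K_\omega$ of $\Sigma$ satisfies $c(G)<8(|V(G)|-\omega)+2^\omega$. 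For the three surfaces of \secref{LowGenus} on which $K_\omega$ triangulates $\Sigma$ --- the sphere, the projective plane and the torus, with $\omega=3,6,7$ --- this is exactly the content of the corresponding propositions, established there via the known complete list of irreducible triangulations; the substance of the conjecture lies in the remaining surfaces, for which no such list is available.

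The first step for a general surface is to show that an irreducible triangulation $G\ne K_\omega$ of $\Sigma$ cannot contain $K_\omega$ as a subgraph. If $K_\omega\subseteq G$ with $|V(G)|>\omega$, then in any embedding of $G$ the surplus vertices lie inside the triangular facial disks of the sub-embedded $K_\omega$ (these are genuine faces, as $K_\omega$ already triangulates $\Sigma$). A disk containing exactly one surplus vertex produces a vertex of degree $3$ whose three incident edges each lie in only two triangles, contradicting irreducibility; and, by Euler's formula applied to the triangulated disk, a disk with two or more surplus vertices either forces a $K_5$ drawn inside a disk --- impossible --- or yields, via a minimum-interior-degree argument, another reducible edge. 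Hence $G$ is $K_\omega$-free, so every clique of $G$ has at most $\omega-1$ vertices; moreover $|V(G)|\le22-13\chi$, by the bound of Joret and Wood quoted in the proof of \thmref{NumCliques}, and $22-13\chi<3\omega^2$ by \eqnref{OmegaChi}.

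It then remains to bound $c(G)$ for a $K_\omega$-free irreducible triangulation $G$ of $\Sigma$ with $n:=|V(G)|\le22-13\chi$. Split the cliques by size: there are $1+n$ of size at most $1$, exactly $|E(G)|=3n-3\chi$ of size $2$, and at most $\tfrac13|E(G)|\cdot n=O(\omega^4)$ of size $3$ (since three times the number of triangles is $\sum_{uv\in E(G)}|N(u)\cap N(v)|\le|E(G)|\cdot n$). Thus the cliques of size at most $3$ contribute only $O(\omega^4)=o(2^\omega)$. The crux is therefore to bound the number of cliques of size between $4$ and $\omega-1$: one must show that a $K_\omega$-free graph embeddable in $\Sigma$ on at most $3\omega^2$ vertices has at most $2^\omega-f(\omega)$ such cliques, where the polynomial deficiency $f(\omega)$ is chosen large enough to dominate the $O(\omega^4)$ gained from the small cliques minus the $8(n-\omega)$ term on the right-hand side. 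The natural approach is to refine the charging scheme of \lemref{FirstUpperBound}, charging each non-empty clique to its minimum-degree vertex $v$ and bounding the number of cliques charged to $v$ by $c(G[N_G(v)])$ --- the number of cliques in the neighbourhood of $v$, which is itself a surface-embeddable $K_{\omega-1}$-free graph carrying a Hamilton cycle (the link of $v$) --- and to control the global sum using the Kruskal--Katona theorem, which bounds the number of $k$-cliques in terms of the (only linearly many) edges.

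The main obstacle is precisely this last estimate. The crude bound of \lemref{FirstUpperBound} gives only $\tfrac52\,2^\omega$, which exceeds the target by a constant factor, so a genuinely sharper argument is needed; and since the slack of the conjecture over $2^\omega$ is only $8(n-\omega)=O(\omega^2)$, what is effectively required is a \emph{stability} statement --- an irreducible triangulation of $\Sigma$ whose number of cliques is within a fixed polynomial of $2^\omega$ must be $K_\omega$ itself --- or, equivalently, a near-complete structural description of the densest $K_\omega$-free irreducible triangulations of $\Sigma$. Applying Kruskal--Katona to sparse graphs does not visibly deliver the required near-exponential saving, and we know of no substitute valid for arbitrary $\Sigma$; this is why the statement is offered as a conjecture rather than a theorem, and why it is currently established only for those surfaces whose irreducible triangulations have been completely enumerated.
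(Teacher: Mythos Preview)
The statement you are addressing is a \emph{conjecture} in the paper, not a theorem; the paper offers no proof. What the paper does say is exactly your first paragraph: by \thmref{Extremal} the conjecture is equivalent to the assertion that $K_\omega$ is the unique irreducible triangulation of $\Sigma$ with maximum excess (this is stated as \conjref{Only}), and it is verified only for $\mathbb{S}_0$, $\mathbb{S}_1$, $\mathbb{N}_1$ via the complete lists of irreducible triangulations. So on the part that can be compared, your reduction matches the paper's own reasoning, and your closing sentence correctly identifies why the statement remains open.

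Your middle two paragraphs go beyond the paper. The intermediate claim --- that when $K_\omega$ triangulates $\Sigma$, no irreducible triangulation $G\neq K_\omega$ of $\Sigma$ can contain $K_\omega$ --- is in fact true, but your sketch is not a proof: the case split ``one surplus vertex / two or more surplus vertices'' with a vague appeal to $K_5$ and minimum interior degree does not pin down a reducible edge. A clean argument runs as follows. Since $\Sigma$ is the genus surface of $K_\omega$, the induced embedding of $K_\omega$ is $2$-cell and hence a triangulation; pick a face $xyz$ of $K_\omega$ containing at least one extra vertex, and let $D$ be the near-triangulation of that disk. Take an innermost separating triangle $T$ of $D$ (or $T=xyz$ if there is none). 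The near-triangulation strictly inside $T$ has no separating triangle, so every edge with an endpoint strictly inside $T$ lies in exactly two triangles of $D$, hence of $G$; any such edge is reducible. This observation is not in the paper and would combine nicely with \conjref{OnlyOnly}, but it does not by itself resolve \conjref{ExactUpperBound}: as you yourself note, the genuine obstacle is the ``stability'' estimate for $K_\omega$-free irreducible triangulations, where the available tools (\lemref{FirstUpperBound}, Kruskal--Katona) fall short by a constant factor. That gap is real and is precisely why the statement is posed as a conjecture.
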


By \thmref{Extremal}, this conjecture is equivalent to:

\begin{conjecture}
  \conjlabel{Only} Suppose that $K_\omega$ triangulates $\Sigma$. Then
  $K_\omega$ is the only irreducible triangulation of $\Sigma$ with
  maximum excess.
\end{conjecture}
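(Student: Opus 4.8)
The plan is to show that any irreducible triangulation $G$ of $\Sigma$ other than $K_\omega$ has strictly smaller excess than $K_\omega$, i.e.\ $c(G) - 8|V(G)| < 2^\omega - 8\omega$. Write $n = |V(G)|$ and note $n > \omega$ whenever $G \neq K_\omega$ (since $K_\omega$ triangulates $\Sigma$, no triangulation has fewer than $\omega$ vertices, and the only $\omega$-vertex triangulation is $K_\omega$ itself). The first step is to pin down how large $n$ can be: combine the degree bound of \lemref{SmallDegree} with the structural fact (used in the proof of \thmref{NumCliques}) that an irreducible triangulation satisfies $n \leq 22 - 13\chi$, together with the defining relation between $\omega$ and $\chi$ from \eqnref{OmegaChi}. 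This confines $n$ to a bounded range above $\omega$.

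The second step is the counting estimate. Using the charging scheme from the proof of \lemref{FirstUpperBound} — order the vertices so that each $v_i$ has minimum degree in $G_i = G - \{v_1,\dots,v_{i-1}\}$, and charge each clique to its lowest-indexed vertex — one bounds $c(G)$ by $\sum_i 2^{d_i}$, where $d_i$ is the minimum degree of the $(n-i+1)$-vertex graph $G_i$. For the top $\omega$ vertices this is at most $2^\omega$; for each earlier vertex $v_i$, \lemref{SmallDegreeJ} gives $d_i \leq \omega - j + 1$ with $j = n - \omega - i + 1$, and crucially $d_i \leq \omega - 1$ when $i = n - \omega$ (the $j=1$ case). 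The point is to show this sum, together with a saving that must be extracted somewhere, falls below $2^\omega + 8(n - \omega)$. A naive application of \lemref{FirstUpperBound} alone does not suffice — it gives roughly $\tfrac{5}{2}2^\omega$, far too weak — so the refinement is to exploit that in an \emph{irreducible} triangulation no edge lies in exactly two triangles, which forces a stronger lower bound on degrees and hence a better handle on how few cliques each vertex can contribute, or alternatively to use irreducibility to bound $n$ much more tightly than $22 - 13\chi$ in the regime where $K_\omega$ triangulates.

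The main obstacle, and the step I expect to be genuinely hard, is closing the gap between the $\tfrac{5}{2}2^\omega$-type upper bound and the target $2^\omega$ bound for irreducible triangulations specifically. The slack in \lemref{FirstUpperBound} comes from the type-2 vertices (those with $i \in [n-\omega-s+1, n-\omega]$), which are allowed degree up to nearly $\omega$; in an arbitrary triangulation such vertices can genuinely exist, but the conjecture asserts that when $K_\omega$ triangulates $\Sigma$ they are incompatible with irreducibility and maximum excess simultaneously. Establishing this likely requires a local structural argument: if $G$ is irreducible with large excess and $n > \omega$, locate a vertex of small degree, examine its link, and either find a reducible edge (contradiction) or show that contracting produces a triangulation whose excess is provably at least as large while having fewer vertices — feeding an induction toward $K_\omega$. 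Making the excess bookkeeping work through such a contraction, while respecting irreducibility, is the crux; for $\Sigma \in \{\mathbb{S}_0, \mathbb{S}_1, \mathbb{S}_2, \mathbb{N}_1, \mathbb{N}_2, \mathbb{N}_3, \mathbb{N}_4\}$ the data in \tabref{table} confirms the conclusion (e.g.\ for $\mathbb{S}_1$ only $K_7$ attains excess $72$), which suggests the general statement is true but that a uniform proof needs a new idea beyond the counting already in the paper.
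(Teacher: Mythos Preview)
The statement you are attempting to prove is \conjref{Only}, and the paper does \emph{not} prove it: it is stated as an open conjecture, verified only by exhaustive computation for the surfaces $\mathbb{S}_0$, $\mathbb{S}_1$, and $\mathbb{N}_1$ (those among the computed surfaces for which $K_\omega$ happens to triangulate $\Sigma$). There is therefore no paper proof to compare your proposal against.

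As for the proposal itself, it is not a proof but an outline of where one would like a proof to go, and you acknowledge this explicitly. Your diagnosis of the difficulty is accurate: the counting in \lemref{FirstUpperBound} tops out at $\tfrac{5}{2}\,2^{\omega}+o(2^\omega)$, whereas the conjecture requires the sharp value $2^{\omega}-8\omega$, and the slack sits precisely in the type-2 vertices. Your suggested fix --- use irreducibility to rule out those vertices or to drive an induction via a contraction that does not decrease excess --- is plausible in spirit, but you have not supplied the mechanism. In particular, irreducibility says every edge lies in at least three triangles, which pushes degrees \emph{up}, not down; this works against the charging bound $\sum_i 2^{d_i}$ rather than for it, so the heuristic ``irreducibility forces a stronger lower bound on degrees and hence a better handle on how few cliques each vertex can contribute'' points in the wrong direction. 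The bound $n\le 22-13\chi$ from \citep{JoretWood-JCTB10} is already used in the paper and is not sharp enough on its own. Until the ``new idea'' you allude to in the last sentence is actually produced, the proposal remains a restatement of why the conjecture is hard rather than a proof of it.
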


The results in \secref{LowGenus} confirm
\twoconjref{ExactUpperBound}{Only} for $\mathbb{S}_0$, $\mathbb{S}_1$
and $\mathbb{N}_1$.

Now consider surfaces possibly with no complete graph
triangulation. Then the bound $c(G)\leq 8(|V(G)|-\omega)+2^{\omega}$
(in \conjref{ExactUpperBound}) is false for $\mathbb{S}_2$,
$\mathbb{N}_2$, $\mathbb{N}_3$ and $\mathbb{N}_4$. Loosely speaking,
this is because these surfaces have `small' $\omega$ compared to
$\chi$. In particular, $\omega=\floor{\half(7+\sqrt{49-24\chi})}$
except for $\mathbb{S}_0$ and $\mathbb{N}_2$, and
$\omega=\half(7+\sqrt{49-24\chi})$ if and only if $K_\omega$
triangulates $\Sigma\neq\mathbb{S}_0$.  This phenomenon motivates the
following conjecture:

\begin{conjecture}
  \conjlabel{GeneralExactUpperBound} Every graph $G$ embeddable in
  $\Sigma$ has at
  most $$8|V(G)|-4(7+\sqrt{49-24\chi})+2^{(7+\sqrt{49-24\chi})/2}$$
  cliques, with equality if and only if $K_\omega$ triangulates
  $\Sigma$ and $G$ is obtained from $K_{\omega}$ by repeatedly
  splitting triangles.
\end{conjecture}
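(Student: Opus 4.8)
The plan is to use \thmref{Extremal} to reduce the conjecture to a statement about irreducible triangulations, and then to sharpen the clique‑counting argument of \secref{Bound}. Write $t:=\half(7+\sqrt{49-24\chi})$, so that the asserted inequality reads $c(G)\le 8|V(G)|-8t+2^{t}$; equivalently, the excess of $G$ is at most $2^{t}-8t$. Since splitting a triangle preserves the excess, by \thmref{Extremal} this bound holds for every graph embeddable in $\Sigma$ if and only if it holds for every irreducible triangulation of $\Sigma$. Moreover $t=\omega$ precisely when $K_\omega$ triangulates $\Sigma$ (and the two formulas agree when $\Sigma=\mathbb{S}_0$), in which case $K_\omega$ has excess exactly $2^{\omega}-8\omega$. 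So the conjecture is equivalent to the conjunction of: (a) every irreducible triangulation of $\Sigma$ has excess at most $2^{t}-8t$; and (b) equality occurs only when $K_\omega$ triangulates $\Sigma$ and the triangulation is $K_\omega$ itself --- which in that case is precisely \conjref{Only}.

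For (a) I would start from the charging scheme in the proof of \lemref{FirstUpperBound} and try to replace the constant $\frac52$ by $1$. The waste there is localised: cliques charged to the last $\omega$ vertices of the minimum‑degree removal order are bounded crudely by $2^{\omega}$ (near‑tight only if those vertices induce an almost‑complete graph), while cliques charged to the preceding $s=\Theta(\sqrt\omega)$ vertices are bounded by roughly $\frac32\,2^{\omega}$ independently. The key new input would be a trade‑off lemma asserting that in an irreducible triangulation these two contributions cannot both be large: if the removal order leaves an almost‑complete subgraph on its last few vertices, then those vertices cannot all have had large degree when deleted, because an induced $K_{\omega}$ together with a few further vertices of small degree constrains the embedding severely --- via Euler's formula applied to the subgraph $K_\omega$, the nonembeddability of $K_{\omega+1}$, and the identity $|E(G)|=3|V(G)|-3\chi$ forced by triangulation. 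Quantitatively one wants $\sum_i 2^{d_i}\le 2^{t}+8(|V(G)|-t)$, with the $2^{t}$ paid only once. It may help that in a triangulation the number of cliques of size at most $3$ is exactly $1+|V(G)|+(3|V(G)|-3\chi)+T$, where $T$ counts triangles, so that (a) is equivalent to bounding $T$ together with the number of copies of $K_4,K_5,\dots$ in an irreducible triangulation --- a question of counting small complete subgraphs, where genus‑based bounds such as those of \citet{Wood-GC07} apply and could plausibly be sharpened in the irreducible setting. Since \citet{JoretWood-JCTB10} give $|V(G)|\le 22-13\chi=O(\omega^2)$, any improvement of the clique count to $2^{\omega}+o(2^{\omega})$ already settles \conjref{ImprovedUpperBound}; the exact form of (a) needs the trade‑off tight up to an additive constant.

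The main obstacle will be (b), that is, \conjref{Only}: proving that $K_\omega$ --- when it triangulates $\Sigma$ --- is the \emph{unique} irreducible triangulation of maximum excess, and that the bound is strict on every other surface. This appears to require control over all irreducible triangulations of an arbitrary surface, whereas the complete list is known only for the seven surfaces treated in \secref{LowGenus}, and their number grows rapidly with $|\chi|$. A plausible route is structural: show that an irreducible triangulation with more than $\omega$ vertices either contains no $K_\omega$ at all, or contains $K_\omega$ with enough slack that each of the surplus vertices contributes at most $8$ cliques to the total and strictly fewer somewhere, forcing the excess strictly below $2^{\omega}-8\omega$. Making this rigorous for every surface, without the classification, is exactly where I expect the argument to stall, and is presumably why the statement is posed only as a conjecture.
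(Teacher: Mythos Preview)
The statement you are addressing is a \emph{conjecture} in the paper, not a theorem; the paper offers no proof, only computational verification for the seven surfaces $\mathbb{S}_0,\mathbb{S}_1,\mathbb{S}_2,\mathbb{N}_1,\mathbb{N}_2,\mathbb{N}_3,\mathbb{N}_4$ via the lists of irreducible triangulations. So there is no ``paper's own proof'' to compare your proposal against, and you yourself acknowledge this at the end of your write-up (``is presumably why the statement is posed only as a conjecture'').

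Your reductions are sound and in line with the paper's framework: using \thmref{Extremal} to reduce to irreducible triangulations is exactly the paper's method, and your observation that the equality case collapses to \conjref{Only} when $K_\omega$ triangulates $\Sigma$ is precisely the equivalence the paper records between \conjref{ExactUpperBound} and \conjref{Only}. Your identification of $t=\omega$ with the condition that $K_\omega$ triangulates $\Sigma$ also matches the paper's remark preceding \conjref{GeneralExactUpperBound}.

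But what you have written is a plan, not a proof. The two substantive steps --- (a) sharpening the charging argument of \lemref{FirstUpperBound} from $\tfrac52\,2^{\omega}$ down to $2^{t}-8t$ exactly, and (b) establishing uniqueness of the extremal irreducible triangulation --- are both left as aspirations. The ``trade-off lemma'' you describe for (a) is not stated precisely, let alone proved, and the paper itself puts even the weaker asymptotic form of (a) forward as the open \conjref{ImprovedUpperBound}. For (b) you correctly diagnose that one would need structural control over all irreducible triangulations of an arbitrary surface, which is not available. In short: your outline is reasonable and consistent with the paper, but it does not close the gap, and the paper does not claim to either.
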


There are two irreducible triangulations of $\mathbb{S}_2$ with
maximum excess, there are three irreducible triangulations of
$\mathbb{N}_2$ with maximum excess, there are 15 irreducible
triangulations of $\mathbb{N}_3$ with maximum excess, and there are
three irreducible triangulations of $\mathbb{N}_4$ with maximum
excess. This suggests that for surfaces with no complete graph
triangulation, a succinct characterisation of the extremal examples
(as in \conjref{Only}) might be difficult. Nevertheless, we conjecture
the following strengthening of \conjref{Only} for all surfaces.

\begin{conjecture}
  \conjlabel{OnlyOnly} Every irreducible triangulation of $\Sigma$
  with maximum excess contains $K_\omega$ as a subgraph.
\end{conjecture}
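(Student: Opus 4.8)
The plan is a counting argument by contradiction. Suppose $T$ is an irreducible triangulation of $\Sigma$ whose excess equals the maximum excess $Q$ over all irreducible triangulations of $\Sigma$, yet $T$ contains no $K_\omega$; the goal is to derive a contradiction by showing $c(T)$ is too small. The conjecture is already verified for $\mathbb{S}_0,\mathbb{S}_1,\mathbb{S}_2,\mathbb{N}_1,\mathbb{N}_2,\mathbb{N}_3,\mathbb{N}_4$ in \secref{LowGenus}, so I may assume that $\chi$ is small enough for $\omega$ to exceed any prescribed constant, which is what makes asymptotic estimates available. First I would bound $Q$ from below: since $K_\omega$ embeds in $\Sigma$ and $c(K_\omega)=2^\omega$, the graph $K_\omega$ has excess exactly $2^\omega-8\omega$, so \thmref{Extremal} gives $Q\geq 2^\omega-8\omega$. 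Also $T$ has at most $22-13\chi$ vertices by the result of \citet{JoretWood-JCTB10} quoted in the proof of \thmref{NumCliques}, and $22-13\chi<3\omega^2$ by \eqnref{OmegaChi}. Hence it suffices to prove that every $K_\omega$-free graph $G$ embeddable in $\Sigma$ with $|V(G)|<3\omega^2$ satisfies $c(G)<2^\omega+8(|V(G)|-\omega)$: applying this with $G:=T$ forces the excess of $T$ below $2^\omega-8\omega\leq Q$, a contradiction.

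To prove this bound I would split the cliques of $G$ by size. The cliques of size at most two number at most $1+|V(G)|+|E(G)|$, which is $O(\omega^2)$ by Euler's formula and \eqnref{OmegaChi} and is dominated by $2^\omega$. For the rest, the crucial leverage is sparsity: $|E(G)|\leq 3|V(G)|-3\chi$, so together with \eqnref{OmegaChi} and the absence of $K_\omega$, Euler's formula should confine all ``large'' cliques --- those of size within a fixed constant of $\omega$ --- to a bounded number of maximal cliques, each of order at most $\omega-1$ (for example, many copies of $K_{\omega-1}$, or a $K_{\omega-2}$ with many vertices each joined to all of it, would need more edges than a $\Sigma$-embeddable graph has). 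These contribute $O(2^{\omega-1})$ cliques, and the cliques of intermediate size can be controlled by sharpening the argument of \lemref{FirstUpperBound}: instead of bounding the cliques charged to $v_i$ by $2^{d_i}$, bound them by the number of cliques of $G[N_{G_i}(v_i)]$, which is $K_{\omega-1}$-free and embeddable in $\Sigma$, and then use the statement for clique number $\omega-1$ by induction on $\omega$.

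The main obstacle is that the conjectured bound is, if true, essentially tight, so a loose count in the spirit of \lemref{FirstUpperBound} (with its factor $\tfrac{5}{2}$) cannot work: one cannot afford any constant above $1$ in front of $2^\omega$. Indeed a $K_\omega$-free graph embeddable in $\Sigma$ can come within $O(1)$ of $2^\omega+8(|V(G)|-\omega)$ cliques --- for instance a $K_{\omega-2}$ together with three pairwise non-adjacent vertices each joined to all of it has exactly $2^\omega$ cliques on $\omega+1$ vertices, with excess $2^\omega-8\omega-8$; and a competitor like $K_{\omega+1}$ minus a $2$-matching is $K_\omega$-free with $2^\omega+2^{\omega-3}$ cliques on $\omega+1$ vertices, which would refute the conjecture were it embeddable in $\Sigma$. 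So the heart of the proof must be a stability statement that pins down the near-extremal $K_\omega$-free configurations and, crucially, rules out the dangerous competitors by showing they do not embed in $\Sigma$ once $\omega$ is large --- which brings in the genus theory of complete and near-complete graphs (e.g.\ that the genus of $K_{\omega+1}$, or of $K_{\omega+1}$ with a few edges deleted, exceeds that of $\Sigma$ when $\omega(\Sigma)=\omega$), together with a finer use of \eqnref{OmegaChi} that distinguishes whether $K_\omega$ triangulates $\Sigma$. Assembling this case analysis is where the real work lies, and is presumably why \conjref{OnlyOnly} remains open.
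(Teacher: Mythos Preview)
The statement you are addressing is \conjref{OnlyOnly}, which the paper leaves as an open conjecture; there is no proof in the paper to compare against beyond the computational verification for the seven small surfaces in \secref{LowGenus}. Your own write-up is explicit that it is a plan rather than a proof, and that the crux remains unresolved.

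Your reductions are sound and are exactly the natural first moves: the lower bound $Q\geq 2^{\omega}-8\omega$ follows from \thmref{Extremal} applied to $K_\omega$, and the vertex bound $|V(T)|<3\omega^{2}$ follows from \citep{JoretWood-JCTB10} together with \eqnref{OmegaChi}, just as in the proof of \thmref{NumCliques}. You have also correctly located the obstruction: any argument in the style of \lemref{FirstUpperBound} loses a constant factor in front of $2^{\omega}$, whereas the target inequality allows no slack, and your $K_{\omega+1}$-minus-a-$2$-matching example shows that the embeddability hypothesis must do real work.

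There are, however, two places where the plan as written would not go through. First, the proposed induction on $\omega$ does not close: the neighbourhood $G[N_{G_i}(v_i)]$ is indeed $K_{\omega-1}$-free, but it is still embeddable in the \emph{same} surface $\Sigma$, whose parameter is $\omega$, not $\omega-1$; so the inductive hypothesis, which concerns a surface with smaller $\omega$, is not available. You would need to prove a uniform statement of the form ``every $K_r$-free graph embeddable in $\Sigma$ has at most $f(r,\Sigma)$ cliques'' and induct on $r$ with $\Sigma$ fixed, and it is exactly the tightness of $f(\omega,\Sigma)$ that is the problem. Second, the assertion that Euler's formula and \eqnref{OmegaChi} ``confine all large cliques to a bounded number of maximal cliques'' is a hope, not an argument: two $(\omega-1)$-cliques sharing $\omega-3$ vertices cost only $O(\omega)$ edges beyond a single $K_{\omega-1}$, so edge-counting alone does not bound their number. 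The genuine content would have to come from the genus of near-complete graphs, as you say in your last paragraph --- and that is precisely where the paper stops and the conjecture begins.
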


A triangulation of a surface $\Sigma$ is \emph{vertex-minimal} if it
has the minimum number of vertices in a triangulation of $\Sigma$. Of
course, every vertex-minimal triangulation is
irreducible. \citet{Ringel55} and \citet{JR80} together proved that
the order of a vertex-minimal triangulation is $\omega$ if $K_\omega$
triangulates $\Sigma$, is $\omega+2$ if
$\Sigma\in\{\mathbb{S}_2,\mathbb{N}_2,\mathbb{N}_3\}$, and is
$\omega+1$ for every other surface.

Triangulations \#26 of $\mathbb{N}_2$ and \#2464 of $\mathbb{N}_3$ are
the only triangulations in Propositions~\ref{prop:Planar}--\ref{prop:N4} that are not vertex
minimal. Triangulation \#26 of $\mathbb{N}_2$ is obtained from two
embeddings of $K_6$ in $\mathbb{N}_1$ joined at the face $bdf$ (see
\figref{Klein}). Triangulation \#2464 of $\mathbb{N}_3$ is obtained by
joining an embedding of $K_6$ in $\mathbb{N}_1$ and an embedding of
$K_7$ in $\mathbb{S}_1$  at the face $bdf$. 

  \begin{figure}[!h]
    \vspace*{1ex}
    \begin{center}\includegraphics{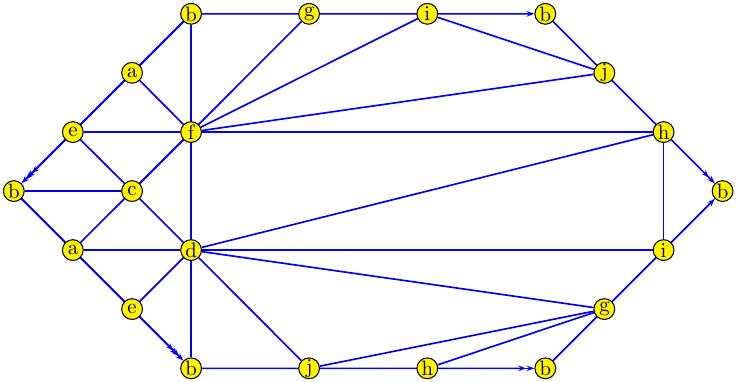}\end{center} 
\vspace*{-2ex}
    \caption{\figlabel{2464} Triangulation \#2464 of $\mathbb{N}_3$.}
  \end{figure}

Every other triangulation in Propositions~\ref{prop:Planar}--\ref{prop:N4} is obtained
from an embedding of $K_\omega$ by adding (at most two) vertices and 
edges until a vertex minimal triangulation is obtained. 
This provides some evidence for our final conjecture:

% also \cite{Huneke-JCTB78}
% $$\begin{cases}
%   \omega &\text{ if }\\
%   \omega+2 &\text{ if }\Sigma\in\{\mathbb{S}_2,\mathbb{N}_2,\mathbb{N}_3\}\\
%   \omega+1&\text{ otherwise}\enspace.
% \end{cases}$$

% \begin{conjecture}
%   \conjlabel{OnlyOnlyOnly} For every surface $\Sigma$, the maximum
%   excess is attained by some vertex-minimal triangulation of
%   $\Sigma$ that contains $K_\omega$ as a subgraph.
% \end{conjecture}

\begin{conjecture}
  \conjlabel{OnlyOnlyOnly} For every surface $\Sigma$, the maximum
  excess is attained by some vertex-minimal triangulation of $\Sigma$
  that contains $K_\omega$ as a subgraph. Moreover, if
  $\Sigma\not\in\{\mathbb{N}_2,\mathbb{N}_3\}$ then every irreducible
  triangulation with maximum excess is vertex-minimal and contains
  $K_\omega$ as a subgraph.
\end{conjecture}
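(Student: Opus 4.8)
\medskip
\noindent\textbf{Proof proposal.} The plan is to work throughout with irreducible triangulations and to sharpen the clique count of \lemref{FirstUpperBound} until it is precise enough to isolate the extremal examples. By \thmref{Extremal}, the maximum excess over all graphs embeddable in $\Sigma$ equals the maximum excess $Q$ over irreducible triangulations of $\Sigma$, and a graph attains excess $Q$ exactly when it is obtained from an irreducible triangulation of excess $Q$ by repeatedly splitting triangles. Since every vertex-minimal triangulation is irreducible, \conjref{OnlyOnlyOnly} is equivalent to two statements about irreducible triangulations: (i) some vertex-minimal triangulation of $\Sigma$ has excess $Q$ and contains $K_\omega$; and (ii) if $\Sigma\notin\{\mathbb{N}_2,\mathbb{N}_3\}$ then every irreducible triangulation of excess $Q$ is vertex-minimal and contains $K_\omega$. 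I would prove (i) and (ii) separately, but both rest on the same counting engine.

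The first step is to rewrite the excess. If $G$ is a triangulation of $\Sigma$ with $n$ vertices, then $|E(G)|=3n-3\chi$ by Euler's formula, so, writing $k_j(G)$ for the number of $j$-element cliques of $G$,
\[
  \mathrm{excess}(G)=c(G)-8n=1-3\chi-4n+\sum_{j\geq 3}k_j(G)\enspace.
\]
Thus we must decide, among irreducible triangulations of $\Sigma$, for which $G$ the quantity $\sum_{j\geq 3}k_j(G)-4n$ is largest. Heuristically $K_\omega$ concentrates about $2^\omega$ cliques onto only $\omega$ vertices, while Euler's formula forces most vertices of any triangulation to have small degree and hence to lie in few cliques; so using more vertices than strictly necessary should be wasteful. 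Making this quantitative is the real difficulty.

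For the counting engine I would refine the charging argument of \lemref{FirstUpperBound}: peel off vertices in order of minimum degree, $G=G_1\supseteq G_2\supseteq\cdots$, and charge each clique to its lowest-indexed vertex. An irreducible triangulation other than $K_3$ has minimum degree at least $4$ (an edge lying in exactly two triangles is reducible), and \lemref{SmallDegreeJ} together with the \citet{JoretWood-JCTB10} bound $n\leq 22-13\chi$ controls how the minimum degree of $G_i$ decays as $i$ grows -- in particular it confines $n$ to the bounded window $[\,n_{\min},\,22-13\chi\,]$. The target is an inequality of the shape $c(G)\leq 8n+2^\omega+E(\Sigma)$ for irreducible triangulations of $\Sigma$, where $E(\Sigma)$ is an \emph{explicit} quantity with small enough dependence on $n$ that the largest value of $\mathrm{excess}(G)$ is forced to occur at the smallest admissible $n$, and that the equality analysis forces the dense part of $G$ to be a complete graph. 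This is a strengthening of \conjref{ImprovedUpperBound}: the slack there is $o(2^\omega)$, which for irreducible triangulations -- where $n$ may be as large as order $|\chi|\sim\omega^2$ -- still swamps the constant-size gaps between the extremal triangulation and the next-best ones (see \tabref{table}). Reducing this slack from $o(2^\omega)$ to an explicit $O(1)$-type term in the relevant degree regime is the step I expect to be the main obstacle, and I would expect it to require exploiting the local link structure of irreducible triangulations and not merely their degree sequence.

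It remains to pin the extremal triangulation down as one containing $K_\omega$, and to settle the existence half of (i). Here I would use the arithmetic relating $\omega$ and $\chi$ -- one has $\omega=\floor{\half(7+\sqrt{49-24\chi})}$ with the two stated exceptions, with equality to $\half(7+\sqrt{49-24\chi})$ precisely when $K_\omega$ triangulates $\Sigma$ (equivalently, $\binom{\omega}{2}=3\omega-3\chi$) -- together with the vertex-minimal orders $\omega$, $\omega+1$, $\omega+2$ of \citet{Ringel55} and \citet{JR80}. When $K_\omega$ triangulates $\Sigma$ the extremal triangulation has $\omega$ vertices and so must be $K_\omega$ itself, which is \conjref{Only}. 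When it does not, an edge count shows that a triangulation on $\omega+1$ (respectively $\omega+2$) vertices has just enough edges to be $K_\omega$ plus one (respectively two) extra vertices of positive degree; one must then construct an actual embedding of such a configuration -- generalising the low-genus extremal examples of \secref{LowGenus} and the Ringel--Jungerman constructions -- and certify via the counting bound that it is extremal. Finally, the exceptions $\mathbb{N}_2$ and $\mathbb{N}_3$ are genuine: \tabref{table} shows that for these two surfaces the maximum excess is also attained by an irreducible triangulation with one more than the minimum number of vertices, so for them only the first sentence of \conjref{OnlyOnlyOnly} can hold, and it would be proved by exhibiting the vertex-minimal witness directly.
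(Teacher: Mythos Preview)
The statement you are attempting to prove is not a theorem in the paper but an open \emph{conjecture}. The paper offers no proof of \conjref{OnlyOnlyOnly}; the only evidence given is the sentence immediately following it: the conjecture has been verified computationally for $\mathbb{S}_0$, $\mathbb{S}_1$, $\mathbb{S}_2$, $\mathbb{N}_1$, $\mathbb{N}_2$, $\mathbb{N}_3$ and $\mathbb{N}_4$, using the complete lists of irreducible triangulations and the clique counts reported in \tabref{table}. So there is nothing to compare your argument against, and any correct proof you produce would be a new result.

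As for the proposal itself, it is an honest research outline rather than a proof, and you correctly identify where it breaks down. The whole argument hinges on the step you label ``the main obstacle'': sharpening the bound of \lemref{FirstUpperBound} from $\tfrac{5}{2}\,2^\omega+o(2^\omega)$ down to something of the form $8n+2^\omega+E(\Sigma)$ with an explicit error $E(\Sigma)$ small enough to separate the vertex-minimal triangulations from all others. You do not supply a mechanism for this; saying it ``would require exploiting the local link structure of irreducible triangulations'' is a hope, not a method. The degree-peeling argument alone is far too coarse --- as you note, the slack it leaves is of order $2^\omega$, while the gaps visible in \tabref{table} between the maximum excess and the next-largest excess are tiny constants (e.g.\ $8$ for $\mathbb{S}_2$ and $\mathbb{N}_4$). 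Closing a gap of size $o(2^\omega)$ down to a constant is exactly the content of the conjecture, so at present the proposal is circular at its decisive step. The constructive half of (i) is also left as an assertion: that a vertex-minimal triangulation containing $K_\omega$ exists on every surface is itself nontrivial and not supplied by the Ringel--Jungerman results you cite.
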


% Note that \conjref{OnlyOnlyOnly} does not assert that every
% irreducible triangulation with maximum excess is
% vertex-minimal---this is false for $\mathbb{N}_2$ and
% $\mathbb{N}_3$; see \tabref{table}.
We have verified
\threeconjref{GeneralExactUpperBound}{OnlyOnly}{OnlyOnlyOnly} for
$\mathbb{S}_0$, $\mathbb{S}_1$, $\mathbb{S}_2$, $\mathbb{N}_1$,
$\mathbb{N}_2$, $\mathbb{N}_3$ and $\mathbb{N}_4$.

% \bibliographystyle{myNatbibStyle}
% \bibliography{myBibliography,myConferences}

\def\cprime{$'$} \def\soft#1{\leavevmode\setbox0=\hbox{h}\dimen7=\ht0\advance
  \dimen7 by-1ex\relax\if t#1\relax\rlap{\raise.6\dimen7
  \hbox{\kern.3ex\char'47}}#1\relax\else\if T#1\relax
  \rlap{\raise.5\dimen7\hbox{\kern1.3ex\char'47}}#1\relax \else\if
  d#1\relax\rlap{\raise.5\dimen7\hbox{\kern.9ex \char'47}}#1\relax\else\if
  D#1\relax\rlap{\raise.5\dimen7 \hbox{\kern1.4ex\char'47}}#1\relax\else\if
  l#1\relax \rlap{\raise.5\dimen7\hbox{\kern.4ex\char'47}}#1\relax \else\if
  L#1\relax\rlap{\raise.5\dimen7\hbox{\kern.7ex
  \char'47}}#1\relax\else\message{accent \string\soft \space #1 not
  defined!}#1\relax\fi\fi\fi\fi\fi\fi}

\end{document}